\newtheorem{Thm}{Theorem}[section]
\newtheorem{Lem}[Thm]{Lemma}
\newtheorem{Prop}[Thm]{Proposition}
\newtheorem*{Thm*}{Theorem}
\newcommand{\C}{\mathbb{C}}           
\newcommand{\Z}{\mathbb{Z}}
\newcommand{\Q}{\mathbb{Q}}
\newcommand{\Hom}{\mathrm{Hom} \,}
\newcommand{\id}{\mathrm{id}}
\newcommand{\cl}{\mathrm{cl}}
\newcommand{\fg}{{\mathfrak g}}
\newcommand{\fS}{{\mathfrak S}}
\newcommand{\ga}{\alpha}
\newcommand{\gl}{\lambda}
\newcommand{\gL}{\Lambda}
\newcommand{\gd}{\delta}
\newcommand{\gD}{\Delta}
\newcommand{\gs}{\sigma}
\newcommand{\ol}{\overline}
\newcommand{\wt}{\mathrm{wt}}
\newcommand{\fboxr}[1]{{\fbox{\raisebox{0pt}[7.5pt][0pt]{$#1$}}}}
\newcommand{\condC}[2]{\text{(G#1)}_{#2}}
\newcommand{\condD}[2]{\text{(D#1)}_{#2}}
\title[Existence of KR crystals of type $G_2^{(1)}$ and $D_4^{(3)}$]
{Existence of Kirillov-Reshetikhin crystals\\ of type $G_2^{(1)}$ and $D_4^{(3)}$}
\author{Katsuyuki Naoi}
\begin{document}

\address{%
Institute of Engineering \\
Tokyo University of Agriculture and Technology\\
2-24-16 Naka-cho, Koganei-shi, Tokyo 184-8588, JAPAN}
\email{naoik@cc.tuat.ac.jp}


\begin{abstract}
 In this paper we prove that every Kirillov-Reshetikhin module of type $G_2^{(1)}$ and $D_4^{(3)}$ has a crystal pseudobase 
 (crystal base modulo signs), by applying the criterion for the existence of a crystal pseudobase due to Kang et al.
\end{abstract}

\maketitle

\section{Introduction}

Let $\fg$ be an affine Kac-Moody Lie algebra, and $U_q'(\fg)$ the quantum affine algebra without the degree operator.
Among simple finite-dimensional $U_q'(\fg)$-modules there is a distinguished family called Kirillov-Reshetikhin (KR for short)
modules, which are parametrized by two integers $r,\ell$ and denoted by $W^{r,\ell}$.
Here $r$ corresponds to a node of the Dynkin diagram of $\fg$ except the prescribed $0$,
and $\ell$ is a positive integer.
KR modules are known to have nice properties, such as $T$ ($Q,Y$)-systems, fermionic character formulas, and so on 
(see \cite{MR1745263,MR1903978,MR1993360,MR2254805,MR2428305,MR2576287} and references therein).

As another nice property, it was conjectured in \cite{MR1745263,MR1903978} that a KR module has a crystal base in the sense of Kashiwara 
\cite{MR1115118}.
This conjecture has been established by Okado and Schilling \cite{MR2403558} when $\fg$ is of nonexceptional type, namely,
when the subalgebra $\fg_0$, whose Dynkin diagram is obtained from that of $\fg$ by removing $0$, is of classical type.
More precisely, they have proved that every KR module of nonexceptional type has a crystal pseudobase.
Here $B$ is said to be a crystal pseudobase if $B/\{\pm 1\}$ is a crystal base (for the precise definition,
see Subsection \ref{Subsection:crystal base and pseudobase} of the present paper).
On the other hand, when $\fg$ is of exceptional type the existence of a crystal pseudobase has not been proved in most cases: 
it is known by general theory \cite{MR1194953,MR1890649} that $W^{r,\ell}$ has a crystal (pseudo)base if one of the following 
conditions holds;
$W^{r,\ell}$ is simple as a $U_q(\fg_0)$-module, $r$ is a special node called the adjoint node, or $\ell = 1$ 
(the crystal structures in these cases are studied in \cite{zbMATH01251404,MR2199630,zbMATH05154980,zbMATH05226997,zbMATH06530852}, and so on).
As far as the author knows, in exceptional types the existence of a crystal pseudobase has been proved for these $W^{r,\ell}$ only.
A main difference between nonexceptional and exceptional types is that every KR module is multiplicity-free as a 
$U_q(\fg_0)$-module in the former types, but this is not true in the latter.

In this paper, we prove that every KR module of type $G_2^{(1)}$ and $D_4^{(3)}$ has a crystal pseudobase.
In both cases $\fg_0$ is of type $G_2$, and this gives a first proof of the existence of a crystal pseudobase of $W^{r,\ell}$ 
for general $r,\ell$ in an exceptional type.
Since one node of $\fg_0$ is adjoint in both cases, the KR modules associated with this node are already known to have crystal pseudobases.
Hence we focus on the ones associated with the other node, and prove the following.

\begin{Thm*}
 Assume that $\fg$ is of type $G_2^{(1)}$ or $D_4^{(3)}$, whose Dynkin diagram is labeled as in {\normalfont(\ref{eq:Dynkin})}.
 Then for any positive integer $\ell$, the KR module $W^{2,\ell}$ has a crystal pseudobase.
\end{Thm*}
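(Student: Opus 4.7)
The plan is to apply the criterion of Kang--Kashiwara--Misra--Miwa--Nakashima--Nakayashiki (KKMMNN) for the existence of a crystal pseudobase, the same tool used by Okado--Schilling in the nonexceptional setting. Recall that this criterion takes as input a finite-dimensional $U_q'(\fg)$-module $V$ equipped with a prepolarization $(\cdot,\cdot)$, a lattice $L$ over $A_0$, and a finite family of weight vectors $\{u_j\}$, and certifies the existence of a crystal pseudobase once one verifies several numerical and operator-theoretic conditions: roughly, $\tilde e_i u_j \equiv 0 \pmod{qL}$ for $i \ne 0$, near-orthonormality $(u_j,u_k) \in \delta_{jk} + qA_0$, and a generation/spanning condition tying the $u_j$ to the rest of $V$ via the Kashiwara operators together with $e_0$ and $f_0$.

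First, I would recall the explicit $U_q(\fg_0)$-decomposition of $W^{2,\ell}$. Since $\fg_0$ is of type $G_2$ in both cases under consideration, the decomposition is known via $Q$-system (for $G_2^{(1)}$) and twisted $Q$-system (for $D_4^{(3)}$) computations; this produces the complete list of $G_2$-isotypic components appearing in $W^{2,\ell}$ together with their multiplicities $m_\lambda$. Next, I would equip $W^{2,\ell}$ with a prepolarization via the tensor product $(W^{2,1})^{\otimes\ell}$: the module $W^{2,1}$ is already known to admit a crystal pseudobase (it is the $\ell=1$ case covered by general theory, as mentioned in the introduction), and $W^{2,\ell}$ appears as a cyclic subquotient of a suitable evaluation of $(W^{2,1})^{\otimes\ell}$, inheriting a prepolarization. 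Extremal vectors of weight $\lambda$ would be constructed as appropriate linear combinations of tensor products of $\fg_0$-highest-weight vectors and then orthonormalized modulo $q$.

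The main obstacle is the handling of multiplicities. Because $W^{2,\ell}$ is not multiplicity-free as a $U_q(G_2)$-module --- the crucial difference from the nonexceptional types treated by Okado--Schilling --- at each weight $\lambda$ with $m_\lambda>1$ one must produce $m_\lambda$ linearly independent extremal vectors that are \emph{simultaneously} orthonormal modulo $q$ under $(\cdot,\cdot)$ \emph{and} behave well under $\tilde e_0, \tilde f_0$ at $q=0$. Verifying both requirements in tandem is the technically demanding step; I expect to proceed by induction on $\ell$, using the coproduct embedding of $W^{2,\ell}$ into $W^{2,1}\otimes W^{2,\ell-1}$ at a specific spectral parameter ratio, transferring orthonormality and the $e_0,f_0$ behavior from $W^{2,\ell-1}$ to $W^{2,\ell}$ along the tensor product, and invoking the rigidity of the classical crystal structure on $B(W^{2,1})\otimes B(W^{2,\ell-1})$ to pin down the correct basis of each multiplicity space. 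The endgame --- checking the spanning/generation condition and the action of $e_0,f_0$ on the chosen extremal vectors --- should then reduce to a finite case analysis controlled by the $G_2$-branching data produced in the first step.
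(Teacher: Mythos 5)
Your proposal follows essentially the same route as the paper: the Kang--Kashiwara--Misra--Miwa--Nakashima--Nakayashiki pseudobase criterion applied to the prepolarization coming from the fusion construction, the $U_q(\fg_0)$-branching rule extracted from the (twisted) fermionic formula, and an induction on $\ell$ through the embedding of $W^{2,\ell}$ into a spectral-parameter twist of $W^{2,\ell-1}\otimes W^{2,1}$, which transfers the pairing computations from $\ell-1$ to $\ell$. The only points where your outline drifts from what is actually needed are that the criterion used requires no spanning condition and no control of $\tilde e_0,\tilde f_0$ --- only the $U_q(\fg_0)$-decomposition, near-orthonormality, and bounds on $\lVert e_i u\rVert^2$ for $i\neq 0$ --- and that no orthonormalization or crystal-rigidity argument at the multiplicity spaces is required, since one can take the highest-weight vectors to be the explicit monomials $e_0^{(a)}e_1^{(b)}e_2^{(c)}e_1^{(d)}e_0^{(d)}v_\ell$ applied to the cyclic vector and prove inductively that these are already orthogonal with norm in $1+qA$.
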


Following \cite{MR2403558}, we prove this theorem by applying the criterion for the existence of a crystal pseudobase 
introduced in \cite{MR1194953}.
A KR module has a prepolarization (bilinear form having some properties) coming from the fusion construction,
and the criterion reduces the existence of a crystal pseudobase
to a statement concerning the values of the prepolarization of certain vectors (see Subsection \ref{Subsection:Reduction}).
However it seems hard to calculate the explicit values, 
and therefore we use the following method.
We consider $W^{2,\ell}$ as a submodule of a twist of $W^{2,\ell-1} \otimes W^{2,1}$, which enables us to express 
the values of the prepolarization on $W^{2,\ell}$ using that on $W^{2,\ell-1}$ and $W^{2,1}$.
Using this expression, we show the statement by the induction on $\ell$.

The paper is organized as follows. 
In Subsections \ref{Subsection:QAA}--\ref{Subsection:Polarization} we recall the basic notions.
In Subsection \ref{Subsection:Criterion} we recall the criterion for the existence of a crystal pseudobase,
and in Subsection \ref{Subsection:fusion} we review the fusion construction of a KR module.
We state the main theorem in Subsection \ref{Subsection:Main_Thm}, and then reduce it to a statement concerning the values
of the prepolarization of certain vectors in Subsection \ref{Subsection:Reduction}.
After collecting equalities used in the proof (Subsection \ref{Subsection:equalities}),
we show the statement for $G_2^{(1)}$ and $D_4^{(3)}$ separately
in Subsections \ref{Subsection:G_2^{(1)}} and \ref{Subsection:D_4^{(3)}}.

\section*{Acknowledgments}
The author is grateful to Masato Okado for answering his question concerning previous works, and to David Hernandez for helpful comments.
He is supported by JSPS Grant-in-Aid for Young Scientists (B) No.\ 16K17563.

\section{Preliminaries}\label{Section:pre}

\subsection{Quantum affine algebra}\label{Subsection:QAA}

Let $\fg$ be an affine Kac-Moody Lie algebra with index set $I=\{0,1,\ldots,n\}$ and Cartan matrix $C=(c_{ij})_{0\leq i,j \leq n}$.
Here the indices are ordered as in \cite[Section 4.8]{MR1104219}.
For simplicity, \textit{we assume that $\fg$ is not of type $A_{2n}^{(2)}$} (later we further assume that $\fg$ is of type 
$G_2^{(1)}$ or $D_4^{(3)}$).
Denote by $\ga_i$ and $h_i$ ($i\in I$) the simple roots and simple coroots respectively.
Let $\gL_i$ ($i \in I$) be the fundamental weights, $\gd$ the generator of null roots,
and $P = \bigoplus_i \Z \gL_i \oplus \Z\gd$ the weight lattice.
Let $W$ be the Weyl group, 
and $(\ , \ )$ a nondegenerate $W$-invariant bilinear form on $P$ satisfying
\[ \min\{ (\ga_i,\ga_i) \mid i \in I\} =2.
\]
Set 
\[ \varpi_i = \gL_i - \langle K, \gL_i\rangle \gL_0 \ \ \ \text{for $i \in I$},
\]
where $K$ denotes the canonical central element of $\fg$.
Set 
\[ P_{\cl} = P/\Z \gd,
\]
and let $\cl\colon P \twoheadrightarrow P_\cl$ be the canonical projection.
By abuse of notation, we also write $\ga_i$, $\gL_i$, $\varpi_i$ for $\cl(\ga_i)$, $\cl(\gL_i)$, $\cl(\varpi_i)$.

Let $q$ be an indeterminate, and set 
\[ [m]_q = \frac{q^m-q^{-m}}{q-q^{-1}}, \ \ \ [n]_{q}! = [n]_{q}[n-1]_{q}\cdots[1]_q, \ \ \ \begin{bmatrix} m \\ n \end{bmatrix}_{q}
= \frac{[m]_q[m-1]_q\cdots[m-n+1]_q}{[n]_q!}
\]
for $m \in \Z$, $n \in \Z_{\ge 0}$.
Set $q_i = q^{(\ga_i,\ga_i)/2}$ for $i\in I$.
The quantum affine algebra (without the degree operator) $U_q'(\fg)$ is the associative $\Q(q)$-algebra generated by $e_i$, $f_i$ ($i \in I$),
$q^{h}$ ($h \in P_\cl^* = \Hom(P_\cl,\Z)$), with the following relations;
\begin{align*}
 q^0=1&, \ \ \ q^{h_1}q^{h_2} = q^{h_1+h_2} \ \ \ \text{for $h_1,h_2 \in P_\cl^*$}, \\
 q^{h}e_iq^{-h}=q^{\langle h, \ga_i\rangle}e_i&, \ \ \ q^{h}f_iq^{-h}=q^{-\langle h,\ga_i\rangle}f_i \ \ \ \text{for $h \in P_\cl^*$, 
  $i \in I$},\\
 e_if_j-f_je_i = \gd_{ij}&\frac{t_i - t^{-1}_i}{q_i-q^{-1}_i} \ \ \ \text{for $i,j\in I$} \text{ where } t_i=q^{(\ga_i,\ga_i)h_i/2},
\end{align*}
and the Serre relations
\[ \sum_{k=0}^{1-c_{ij}} (-1)^ke_i^{(k)}e_je_i^{(1-c_{ij}-k)} = 0, \ \ \ 
   \sum_{k=0}^{1-c_{ij}} (-1)^kf_i^{(k)}f_jf_i^{(1-c_{ij}-k)} = 0 \ \ \ \text{for 
   $i,j \in I$ ($i\neq j$)}.
\]
Here we set $e_i^{(k)}=e_i^k/[k]_{q_i}!$, $f_i^{(k)}=f_i^k/[k]_{q_i}!$.

Denote by $\gD$ the coproduct of $U_q'(\fg)$ defined by
\begin{equation}\label{eq:coproduct}
 \gD(q^h) = q^h \otimes q^h,\ \ \ 
 \gD(e_i) = e_i \otimes t_i^{-1} + 1\otimes e_i, \ \ 
 \gD(f_i)= f_i\otimes 1 + t_i\otimes f_i
\end{equation}
for $h \in P_\cl^*$, $i\in I$.

For a $U_q'(\fg)$-module $M$ and $\gl \in P_\cl$, we write
\[ M_\gl = \{ v \in M \mid q^h{v} = q^{\langle h, \gl\rangle }v \ \text{ for $h \in P_\cl^*$}\}.
\]
If $v \in M_\gl$ with $v\neq 0$, we write $\wt(v) = \gl$.

\subsection{Crystal bases and pseudobases}\label{Subsection:crystal base and pseudobase}

We briefly recall the definition of crystal bases
(see \cite{MR1881971} for more details), and crystal pseudobases.
Let $A$ be the subring of $\Q(q)$ consisting of rational functions without poles at $q=0$.
Let $M$ be an integrable $U_q'(\fg)$-module, and $\tilde{e}_i,\tilde{f}_i$ ($i\in I$)
the Kashiwara operators.
A free $A$-submodule $L$ of $M$ is called a \textit{crystal lattice} of $M$ if\\[5pt]
(a) $M \cong \Q(q)\otimes_A L$, \ \ (b) $L = \bigoplus_{\gl \in P_\cl} L_\gl$ 
where $L_\gl = L \cap M_\gl$,\\
(c) $\tilde{e}_iL \subseteq L$, $\tilde{f}_iL \subseteq L$ ($i \in I$).\\[5pt]
A pair $(L,B)$ is called a \textit{crystal base} of $M$ if\\[5pt]
(i) $L$ is a crystal lattice of $M$, \ \ (ii) $B$ is a $\Q$-basis of $L/qL$,\\
(iii) $B=\bigsqcup_{\gl \in P_\cl} B_\gl$ where $B_\gl = B\cap \big(L_\gl/qL_\gl)$, \ \ (iv) $\tilde{e}_iB \subseteq B \cup \{0\}$,
$\tilde{f}_iB \subseteq B \cup \{0\}$,\\ (v) for $b,b'\in B$ and $i \in I$, $\tilde{f}_ib = b'$ if and only if $\tilde{e}_ib' = b$.\\[5pt]
We call $(L,B)$ a \textit{crystal pseudobase} of $M$ if they satisfy the conditions (i), (iii)--(v), and
 (ii') $B= B' \sqcup (-B')$ with $B'$ a $\Q$-basis of $L/qL$.

\subsection{Polarization}\label{Subsection:Polarization}

Let $M$ and $N$ be $U_q'(\fg)$-modules.
A bilinear pairing $( \ , \ ) \colon M \times N \to \Q(q)$ is said to be \textit{admissible} if it satisfies
\begin{equation}\label{eq:admissible}
 (q^hu,v) = (u,q^hv), \ \ \ (e_iu,v)=(u,q_i^{-1}t_i^{-1}f_iv), \ \ \ (f_iu,v)=(u,q_i^{-1}t_ie_iv)
\end{equation}
for $h \in P_\cl^*$, $i \in I$, $u \in M$, $v \in N$.
A bilinear form $(\ , \ )$ on $M$ is called a \textit{prepolarization} if it is symmetric and satisfies (\ref{eq:admissible}) for $u,v \in M$.
A prepolarization is called a \textit{polarization} if it is positive definite with respect to the following total order on $\Q(q)$:
\[ f > g \text{ if and only if } f-g \in \bigsqcup_{n\in \Z} \left\{ q^n(c+ qA)\mid c \in \Q_{>0}\right\},
\]
and $f \geq g $ if $f=g$ or $f>g$.
Throughout the paper, we use the notation $||u||^2 = (u,u)$ for $u \in M$.

\subsection{Criterion for the existence of a crystal pseudobase}\label{Subsection:Criterion}

In this subsection, we recall a criterion for the existence of a crystal pseudobase given by \cite{MR1194953}.

Let $\fg_0$ denote the simple Lie algebra whose Dynkin diagram is obtained from that of $\fg$ by removing the node $0$.
Let $I_0 = I\setminus \{0\}$, $P_0$ be the weight lattice of $\fg_0$, and $P_0^+\subseteq P_0$ the subset of dominant integral weights.
We identify $P_0$ with a subgroup of $P_\cl$ as follows:
\[ P_0 = \sum_{i\in I_0} \Z \varpi_i  \subseteq P_\cl.
\]
Let $U_q(\fg_0)$ be the $\Q(q)$-subalgebra of $U_q'(\fg)$ generated by $e_i$, $f_i$, $q^{h_i}$ ($i \in I_0$),
which is the quantized enveloping algebra of $\fg_0$.
For $\gl \in P_0^+$, let $V_0(\gl)$ denote the simple integrable $U_q(\fg_0)$-module with highest weight $\gl$.

Let $A_\Z$ and $K_\Z$ be the subalgebras of $\Q(q)$ defined respectively by
\[ A_\Z = \{f(q)/g(q) \mid f(q),g(q) \in \Z[q], g(0)=1\}, \ \ \ K_\Z=A_\Z[q^{-1}].
\]
Let $U_q'(\fg)_{K_\Z}$ denote the $K_\Z$-subalgebra of $U_q'(\fg)$ generated by $e_i,f_i,q^h$ ($i \in I, h \in P_\cl^*$).

Now the following proposition, which immediately follows from \cite[Propositions 2.6.1 and 2.6.2]{MR1194953}, is a key tool to show
our main theorem.

\begin{Prop}\label{Prop:criterion}
 Let $M$ be a finite-dimensional integrable $U_q'(\fg)$-module,
 and assume that $M$ has a prepolarization $(\ , \ )$ and a $U_q'(\fg)_{K_\Z}$-submodule $M_{K_\Z}$ such that $(M_{K_\Z},M_{K_\Z})\subseteq
 K_\Z$.
 Let $\gl_1,\ldots,\gl_m \in P_0^+$, and assume further that the following conditions hold:
 \begin{enumerate}
  \setlength{\parskip}{1pt} 
  \setlength{\itemsep}{1pt} 
  \item[(i)] $M \cong \bigoplus_{l=1}^m V_0(\gl_l)$ as $U_q(\fg_0)$-modules,
  \item[(ii)] there exist $u_k \in (M_{K_\Z})_{\gl_k}$ $(1\leq k \leq m)$ such that $(u_k,u_l) \in \gd_{kl} +qA$,
  and $||e_iu_k||^2\in q_i^{-2\langle h_i,\gl_k\rangle-2}qA$ for all $i \in I_0$.
 \end{enumerate}
 Then $(\ , \ )$ is a polarization.
 Moreover, if we set
 \[ L = \{u \in M \mid ||u||^2 \in A\} \text{ and } B= \{b \in (M_{K_\Z} \cap  L)/(M_{K_\Z} \cap qL)\mid (b,b)_0 = 1\}
 \]
 where $(\ , \ )_0$ is the $\Q$-valued bilinear form on $L/q L$ induced by $(\ , \ )$, then $(L,B)$ is a crystal pseudobase of $M$.
\end{Prop}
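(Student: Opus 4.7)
The plan is to deduce the criterion from \cite[Propositions 2.6.1 and 2.6.2]{MR1194953}: the latter provides a sufficient condition for a prepolarization on a finite-dimensional integrable $U_q'(\fg)$-module to be a polarization, and the former extracts a crystal pseudobase from a polarization together with a $K_\Z$-integral form. The proof accordingly splits into two steps: (a) show $(\ ,\ )$ is positive definite, and (b) produce the pair $(L,B)$.

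For step (a), the analysis proceeds via the $U_q(\fg_0)$-decomposition. Any prepolarization makes distinct isotypic components of a $U_q(\fg_0)$-module orthogonal (by a Schur-type argument applied to the associated equivariant pairing), and on a single simple summand $V_0(\gl)$ a prepolarization is determined up to scalar by its value at a highest weight vector, hence is a polarization iff that scalar is positive. Positivity thus reduces to a Gram matrix computation on the direct sum of highest weight subspaces $M^+_\gl := \bigcap_{i \in I_0}\ker (e_i|_{M_\gl})$. Hypothesis (ii) is tailor-made for this: the norm bound $||e_iu_k||^2 \in q_i^{-2\langle h_i,\gl_k\rangle-2}qA$ is the quantitative statement that $u_k$ is highest-weight modulo a vector of small norm, which forces the projection $u_k^+$ of $u_k$ onto $M^+_{\gl_k}$ to agree with $u_k$ modulo $qL$. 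Combined with $(u_k,u_l)\in\gd_{kl}+qA$, this shows that the Gram matrix of the $u_k^+$'s is the identity plus a matrix with entries in $qA$, hence positive definite. In particular the determining scalar on each isotypic component is positive, so $(\ ,\ )$ is a polarization.

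For step (b), once positivity holds, the set $L=\{u\in M\mid ||u||^2\in A\}$ is automatically a free $A$-lattice compatible with the weight decomposition, and the hypothesis $(M_{K_\Z},M_{K_\Z})\subseteq K_\Z$ combined with $U_q'(\fg)_{K_\Z}$-stability of $M_{K_\Z}$ guarantees that $M_{K_\Z}\cap L$ behaves well under the Kashiwara operators modulo $qL$. Applying \cite[Prop.~2.6.1]{MR1194953} then directly yields that $(L,B)$ is a crystal pseudobase.

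The main obstacle is the verification in step (a) that projection onto $\bigoplus_l M^+_{\gl_l}$ does not spoil the hypotheses: one has to check that this projection is $K_\Z$-integral and that the $q$-adic orders of $(u_k^+,u_l^+)$ match those of $(u_k,u_l)$ to leading order. This is precisely where the exponent $-2\langle h_i,\gl_k\rangle-2$ becomes essential: it matches the exact $q$-adic scale of applying $f_i$ followed by $e_i$ inside a weight-$\gl_k$ space, so that an ``error term'' of the form $f_iw$ present in $u_k$ would contribute to $||e_iu_k||^2$ at strictly lower $q$-order than allowed, forcing such error terms to be negligible. Once this bookkeeping is in place, both halves of the proposition follow from the cited results of \cite{MR1194953}.
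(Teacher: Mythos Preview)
Your approach is exactly the paper's: the paper provides no argument beyond stating that the proposition ``immediately follows from \cite[Propositions 2.6.1 and 2.6.2]{MR1194953}'', which is precisely the reduction you outline. Your sketch of how condition (ii) forces the Gram matrix on highest weight vectors to be $\mathrm{id}+qA$ and hence positive definite is a correct elaboration of what \cite[Proposition 2.6.2]{MR1194953} does, and the extraction of $(L,B)$ is indeed the content of \cite[Proposition 2.6.1]{MR1194953}.
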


\subsection{Fundamental representations and fusion construction of KR modules}\label{Subsection:fusion}

For $r \in I_0$, denote by $W(\varpi_r)$ the \textit{fundamental representation} introduced by \cite{MR1890649}.
$W(\varpi_r)$ are known to have the following properties.

\begin{Prop}[{\cite[Propositions 2.4 and 2.6]{MR2403558}}]\label{Prop:fund.rep.} \
 \begin{enumerate}
  \setlength{\parskip}{1pt} 
  \setlength{\itemsep}{1pt} 
 \item[(i)] $W(\varpi_r)$ is a finite-dimensional simple integrable $U_q'(\fg)$-module.
 \item[(ii)] $\dim W(\varpi_r)_{\varpi_r} = 1$.
 \item[(iii)] The weight set of $W(\varpi_r)$, which is a subset of $P_\cl$,
 is contained in the intersection of $\varpi_r + \sum_{i \in I} \Z \ga_i$ 
 and the convex hull of $W\varpi_r$.
 \item[(iv)] $W(\varpi_r)$ has a crystal base.
 \item[(v)] $W(\varpi_r)$ has a polarization $(\ , \ )$.
 \end{enumerate}
\end{Prop}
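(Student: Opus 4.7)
The plan is to construct $W(\varpi_r)$ following Kashiwara's realization and to extract the five properties in the listed order, essentially reproducing the argument of \cite{MR1890649} that is compiled in \cite{MR2403558}. Concretely, I would begin with the level-zero extremal weight $U_q(\fg)$-module $V(\varpi_r)$ generated by an extremal vector $u_{\varpi_r}$ subject to Kashiwara's extremal weight relations (the set of relations characterizing extremal vectors with respect to the $W$-action). Since $\varpi_r$ has level zero, $W\varpi_r$ is finite modulo translations by imaginary roots, and one then defines $W(\varpi_r)$ as the $U_q'(\fg)$-module obtained from $V(\varpi_r)$ by killing the central $\Q(q)[z_r^{\pm1}]$-action (the translation symmetries coming from the lattice part of the extended affine Weyl group).

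For (i), finite-dimensionality would follow from the fact that, after quotienting by the $z_r$-action, only finitely many $P_\cl$-weights survive (these lie in the image of $W\varpi_r$ in $P_\cl$, which is finite), together with the bound on weight multiplicities of extremal weight modules. Simplicity and integrability are then standard: integrability is built into the definition of $V(\varpi_r)$, and simplicity follows because any nonzero submodule must contain the image of $u_{\varpi_r}$, which generates the whole module. Part (ii) is immediate since $\dim V(\varpi_r)_{\varpi_r}=1$ by the extremal weight relations, and passage to $W(\varpi_r)$ preserves this. Part (iii) then reduces to two standard facts about extremal weight modules: weights differ from $\varpi_r$ by elements of $\sum_i\Z\ga_i$, and the weight set lies in the convex hull of $Wu_{\varpi_r}$ by a Demazure-type argument using the $\widetilde{e}_i,\widetilde{f}_i$-strings at each extremal vector.

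For (iv), I would invoke Kashiwara's theorem that level-zero extremal weight modules possess a crystal base $(\mathcal{L}(\varpi_r),\mathcal{B}(\varpi_r))$, and then show that $z_r$ acts on this crystal base by a permutation of $\mathcal{B}(\varpi_r)$ so that the quotient inherits a well-defined crystal base. For (v), I would construct a prepolarization on $V(\varpi_r)$ by combining Kashiwara's bar involution with the natural contravariant pairing characterized by $(u_{\varpi_r},u_{\varpi_r})=1$ and the adjointness requirements in \eqref{eq:admissible}, verify that this form descends to $W(\varpi_r)$, and finally deduce positive definiteness from the existence of the crystal base in (iv) via the standard argument that $(\mathcal{L},\mathcal{L})\subseteq A$ and $(\,,\,)\equiv(\,,\,)_0 \pmod{qA}$ on $\mathcal{L}$.

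The main obstacle is (iv): the construction of crystal bases for level-zero extremal weight modules is genuinely deep and relies on Kashiwara's global basis machinery together with a careful analysis of how the $z_r$-action interacts with the upper and lower crystal lattices; the descent step, which requires $z_r$ to preserve $\mathcal{L}(\varpi_r)$ and to act as a signed permutation at $q=0$, is where the hardest technical work lies. The positivity step in (v) is also nontrivial, but once (iv) is in hand it becomes a formal consequence of the adjointness of $\widetilde{e}_i$ and $\widetilde{f}_i$ with respect to the normalized form.
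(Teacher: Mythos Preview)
The paper does not prove this proposition at all: it is quoted verbatim from \cite[Propositions~2.4 and~2.6]{MR2403558}, which in turn records results of Kashiwara \cite{MR1890649}. So there is no ``paper's own proof'' to compare against; the result is used as a black box.

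Your sketch is a faithful outline of Kashiwara's construction in \cite{MR1890649}, which is indeed what the cited statement ultimately rests on, and the identification of (iv) as the deep step is accurate. One small correction: for (i), simplicity of $W(\varpi_r)$ is not quite as automatic as you suggest (``any nonzero submodule must contain the image of $u_{\varpi_r}$'' needs justification, since a priori a submodule could avoid the extremal weight); in Kashiwara's argument this uses the characterization of $W(\varpi_r)$ via its Drinfeld polynomials, or equivalently the fact that $W(\varpi_r)$ is a ``good'' module whose crystal graph is connected. Otherwise your plan matches the underlying source.
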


For a $U_q'(\fg)$-module $M$, denote by $M_{\mathrm{aff}}$ the $U_q'(\fg)$-module $\Q(q)[z,z^{-1}]\otimes M$
on which $e_i$ and $f_i$ 
act by $z^{\gd_{0i}}\otimes e_i$ and $z^{-\gd_{0i}}\otimes f_i$, respectively.
For a nonzero $a \in \Q(q)$, set $M_a = M_{\mathrm{aff}}/(z-a)M_{\mathrm{aff}}$.
For $v \in M$, denote by $v_a \in M_a$ the image of $1 \otimes v$ under the projection $M_{\mathrm{aff}} \twoheadrightarrow M_a$. 
We simply write $v$ for $v_a$ when no confusion is possible. 
The following lemma is a consequence of \cite[Proposition 9.3]{MR1890649} (note that it is also proved in the paper that $W(\varpi_r)$ is a
 ``good" module).

\begin{Lem}
 Assume that $a,b \in \Q(q)$ are nonzero and satisfy $a/b \in A$. 
 Then for any $r \in I_0$, there exists a unique nonzero $U_q'(\fg)$-module homomorphism {\normalfont(}the normalized $R$-matrix{\normalfont)}
 \[ R(a,b) \colon W(\varpi_r)_a \otimes W(\varpi_r)_b \to W(\varpi_r)_b \otimes W(\varpi_r)_a
 \]
 satisfying $R(a,b)\big(u_a \otimes u_b\big) = u_b\otimes u_a$, 
 where $u$ is a nonzero vector of $W(\varpi_r)$ with weight $\varpi_r$.
\end{Lem}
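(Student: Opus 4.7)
The plan is to build the normalized $R$-matrix first with a formal spectral parameter $z$ and then specialize at $z = a/b$, using the hypothesis $a/b \in A$ to rule out poles at the specialization point. Concretely, I would extend scalars and set $W(\varpi_r)_z := W(\varpi_r)_{\mathrm{aff}} \otimes_{\Q(q)[z^{\pm 1}]} \Q(q)(z)$, regarded as a $U_q'(\fg) \otimes_{\Q(q)} \Q(q)(z)$-module, and similarly fix a copy $W(\varpi_r)_1$ with specialization at $z = 1$. Form the tensor products $M(z) := W(\varpi_r)_z \otimes W(\varpi_r)_1$ and $M'(z) := W(\varpi_r)_1 \otimes W(\varpi_r)_z$.

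By Proposition \ref{Prop:fund.rep.}(ii)--(iii), the vector $u \otimes u \in M(z)$ is, up to scalar, the unique weight vector of maximal weight $2\varpi_r$. A standard generic-parameter argument (leveraging simplicity of $W(\varpi_r)$) shows that $M(z)$ is simple over $U_q'(\fg) \otimes \Q(q)(z)$ and is cyclically generated by $u \otimes u$, and likewise for $M'(z)$. A nonzero $U_q'(\fg) \otimes \Q(q)(z)$-linear map $R^{\mathrm{norm}}(z)\colon M(z) \to M'(z)$ exists (for instance coming from the universal $R$-matrix applied to these evaluation modules), and is unique up to scalar by cyclicity, so the normalization $u \otimes u \mapsto u \otimes u$ pins it down. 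Choosing a weight basis of $W(\varpi_r) \otimes W(\varpi_r)$, the matrix entries of $R^{\mathrm{norm}}(z)$ are rational functions of $z$ with a common denominator $d_r(z) \in \Q(q)[z]$, whose roots correspond to values of $z$ where $M(z)$ fails to be irreducible. Since $W(\varpi_r)$ is a \emph{good} module in the sense of Kashiwara (as recorded after the statement of the lemma), these roots lie at specific monomials in $q$ outside $A$. Therefore $R^{\mathrm{norm}}(z)$ specializes regularly at any $z_0 \in A$; rescaling spectral parameters then yields the desired $R(a,b)$ whenever $a/b \in A$, with the prescribed normalization on $u_a \otimes u_b$. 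Uniqueness of $R(a,b)$ is inherited from the generic setting: any other intertwiner, tested on $u_a \otimes u_b$, differs from $R(a,b)$ by a scalar that the normalization fixes to be $1$.

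The main obstacle is the pole analysis: one must know that the denominator $d_r(z)$ has no root in $A$. This is precisely the substantive content of the good-module property, encapsulated in \cite[Proposition~9.3]{MR1890649}. A fully self-contained argument would either invoke that result directly or, for the specific cases $\fg$ of type $G_2^{(1)}$ and $D_4^{(3)}$ with $r = 2$ used in this paper, compute $d_r(z)$ explicitly from the known decomposition of $W(\varpi_r) \otimes W(\varpi_r)$ as a $U_q(\fg_0)$-module and verify root-by-root that every root is a genuine monomial in $q$ with nonzero exponent, hence not in $A$.
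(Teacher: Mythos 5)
Your argument is essentially the paper's own: the paper gives no independent proof but derives the lemma directly from \cite[Proposition 9.3]{MR1890649} together with the fact (also proved there) that $W(\varpi_r)$ is a good module, which is exactly the pole-analysis crux you isolate at the end. The generic-parameter construction and uniqueness-by-cyclicity you spell out are the standard content behind that citation, so your proposal is correct and follows the same route.
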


Now let us recall the fusion construction of Kirillov-Reshetikhin modules.
Fix $r \in I_0$ and $\ell \in \Z_{>0}$, and let $w \in W(\varpi_r)$ be a vector with weight $\varpi_r$ and satisfying $||w||^2 =1$.
Let $\mathfrak{S}_\ell$ be the $\ell$-th symmetric group, and $s_i \in \fS_\ell$ the adjacent transposition interchanging $i$ and $i+1$.
For $\gs \in \mathfrak{S}_\ell$, denote by $\ell(\gs) \in \Z_{\ge 0}$ the length of $\gs$.
Let $x_1,\ldots,x_\ell \in \Q(q)\setminus\{0\}$ be such that $x_i/x_j \in A$ for all $i<j$.
Then for any $\gs \in \fS_\ell$, we can construct a well-defined $U_q'(\fg)$-module homomorphism 
\[ R_\gs(x_1,\ldots,x_\ell) \colon W(\varpi_r)_{x_1} \otimes \cdots \otimes W(\varpi_r)_{x_\ell}
   \to W(\varpi_r)_{x_{\gs(1)}} \otimes \cdots \otimes W(\varpi_r)_{x_{\gs(\ell)}}
\]
by
\begin{align*}
 R_1(x_1,\ldots,x_\ell) =& \id\\
 R_{s_i}(x_1,\ldots,x_\ell) =& \left(\bigotimes_{j<i} \id_{W(\varpi_r)_{x_j}}\right) \otimes R(x_i,x_{i+1}) \otimes \left(\bigotimes_{j>i+1}
 \id_{W(\varpi_r)_{x_j}}\right)\\
 R_{\gs'\gs}(x_1,\ldots,x_\ell) =& R_{\gs'}(x_{\gs(1)},\ldots,x_{\gs(\ell)}) \circ R_\gs(x_1,\ldots,x_\ell) \\
 &\ \text{for $\gs,\gs' \in \fS_\ell$ such that 
 $\ell(\gs'\gs) = \ell(\gs')+\ell(\gs)$}.
\end{align*}
Set 
\[ k = \begin{cases} (\ga_r,\ga_r)/2 & \text{if $\fg$ is of nontwisted affine type},\\
                     1 & \text{if $\fg$ is of twisted affine type} .
       \end{cases}
\]
Let $\gs_0 \in \fS_\ell$ denote the longest element, and define a map $R_\ell$ by
\begin{align*}
 R_\ell = &R_{\gs_0}(q^{k(\ell-1)},q^{k(\ell-3)},\ldots,q^{k(1-\ell)})\colon\\
 &W(\varpi_r)_{q^{k(\ell-1)}} \otimes \cdots\otimes W(\varpi_r)_{q^{k(1-\ell)}} \to W(\varpi_r)_{q^{k(1-\ell)}} \otimes \cdots
 \otimes W(\varpi_r)_{q^{k(\ell-1)}}.
\end{align*}
We denote the image of $R_\ell$ by $W^{r,\ell}$.
Then $W^{r,\ell}$ is a simple $U_q'(\fg)$-module by \cite[Theorem 9.2]{MR1890649}, and it is easily seen that the Drinfeld polynomials 
(see \cite{MR1357195} for the nontwisted case and \cite{zbMATH01202580} for the twisted case) of $W^{r,\ell}$ are 
\[ P_i(u) = \begin{cases} (1-a^\dag_rq^{1-\ell}_ru)(1-a^\dag_rq^{3-\ell}_ru)\cdots(1-a^\dag_rq^{\ell-1}_ru) & \text{if $i=r$},\\
                          1 & \text{otherwise},
            \end{cases}
\]
where $a_r^\dag \in \Q(q)$ is the element such that the Drinfeld polynomials of $W(\varpi_r)$ are 
\[ P_r(u) = 1-a^\dag_ru, \ \ \ P_j(u) = 1 \ (j\neq r).
\]
The module $W^{r,\ell}$ is called the \textit{Kirillov-Reshetikhin module} (KR module for short) associated with $r,\ell$.

Let us recall how to define a prepolarization on $W^{r,\ell}$.
We begin with recalling the following lemma.

\begin{Lem}[{\cite[Lemma 3.4.1]{MR1194953}}]\label{Lem:admissible_pairing}
 Let $M_j$ and $N_j$ $(j=1,2)$ be $U_q'(\fg)$-modules, and assume for $j =1,2$ that there exists 
 an admissible pairing $(\ , \ )_j$ between $M_j$ and $N_j$.
 Then the pairing $(\ , \ )$ between $M_1 \otimes M_2$ and $N_1 \otimes N_2$ defined by 
 \[ (u_1 \otimes u_2, v_1 \otimes v_2) = (u_1,v_1)_1 (u_2,v_2)_2 \ \ \text{for $u_j \in M_j$, $v_j \in N_j$}
 \]
 is admissible.
\end{Lem}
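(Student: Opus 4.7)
The plan is to verify each of the three defining axioms of admissibility—the $q^h$-equivariance and the two adjointness relations for $e_i$ and $f_i$—directly on elementary tensors $u_1 \otimes u_2$ and $v_1 \otimes v_2$, extending bilinearly. Since $\Delta(q^h) = q^h \otimes q^h$, the first axiom reduces at once to the product of the two factor axioms. The remaining two axioms will follow by expanding the coproduct (\ref{eq:coproduct}), collecting the two resulting summands, and applying admissibility of $(\ ,\ )_1$ and $(\ ,\ )_2$ separately to each summand.

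For the $e_i$ relation, I would compute
\[
 \bigl(e_i(u_1\otimes u_2),\, v_1\otimes v_2\bigr) = (e_iu_1,v_1)_1\,(t_i^{-1}u_2,v_2)_2 + (u_1,v_1)_1\,(e_iu_2,v_2)_2,
\]
using $\Delta(e_i) = e_i \otimes t_i^{-1} + 1 \otimes e_i$. On the other side,
\[
 \bigl(u_1\otimes u_2,\, q_i^{-1}t_i^{-1}f_i(v_1\otimes v_2)\bigr)
 = q_i^{-1}\bigl[(u_1,t_i^{-1}f_iv_1)_1\,(u_2,t_i^{-1}v_2)_2 + (u_1,v_1)_1\,(u_2,t_i^{-1}f_iv_2)_2\bigr],
\]
using $\Delta(f_i) = f_i \otimes 1 + t_i \otimes f_i$ and $t_i^{-1}t_i = 1$. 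To match the two expressions termwise, I apply the admissibility of $(\ ,\ )_j$ to rewrite $(e_iu_j,v_j)_j = q_i^{-1}(u_j,t_i^{-1}f_iv_j)_j$, and I use the $q^h$-equivariance of $(\ ,\ )_2$ (already established) to move $t_i^{-1}$ across: $(t_i^{-1}u_2,v_2)_2 = (u_2,t_i^{-1}v_2)_2$. Both terms then line up. The $f_i$ relation is handled in the same manner, with the roles of $e_i/f_i$ and of the two tensor factors swapped accordingly (the asymmetry coming from $\Delta(f_i) = f_i\otimes 1 + t_i \otimes f_i$ instead).

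There is no serious obstacle here; the only subtlety worth flagging is bookkeeping of the $t_i^{\pm1}$ factors produced by the asymmetric coproduct, which is why it matters that the $q^h$-axiom be verified first so that one can freely shift $t_i^{\pm1}$ from one argument of the pairing to the other. Bilinearity then extends the identity from elementary tensors to all of $M_1 \otimes M_2$ and $N_1 \otimes N_2$, completing the verification.
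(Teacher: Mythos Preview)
Your verification is correct and is the standard direct check; there is nothing to add. Note, however, that the paper itself does not prove this lemma at all---it is simply quoted from \cite[Lemma~3.4.1]{MR1194953} without argument---so there is no ``paper's own proof'' to compare against. Your approach is exactly the routine verification one would expect in the cited source.
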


By the lemma and Proposition \ref{Prop:fund.rep.} (v), there exists an admissible pairing $(\ , \ )_0$
between $W(\varpi_r)_{q^{k(\ell-1)}} \otimes\cdots
\otimes W(\varpi_r)_{q^{k(1-\ell)}}$ and $W(\varpi_r)_{q^{k(1-\ell)}} \otimes\cdots \otimes W(\varpi_r)_{q^{k(\ell-1)}}$.
Now a bilinear form $(\ , \ )$ on $W^{r,\ell}$ is defined by
\[ (R_\ell u,R_\ell v) = (u,R_\ell v)_0
\]
for $u,v \in W(\varpi_r)_{q^{k(\ell-1)}} \otimes\cdots\otimes W(\varpi_r)_{q^{k(1-\ell)}}$.
We define a $U_q'(\fg)_{K_\Z}$-submodule $(W^{r,\ell})_{K_\Z}$ of $W^{r,\ell}$ by
\begin{align*}
 (W^{r,\ell})_{K_\Z}
 =R_\ell\big(U_q'(\fg)_{K_\Z}w \otimes \cdots \otimes U_q'(\fg)_{K_\Z}w\big)
   \cap \big(U_q'(\fg)_{K_\Z}w \otimes \cdots \otimes U_q'(\fg)_{K_\Z}w\big).
\end{align*}

\begin{Prop}[{\cite[Proposition 3.4.3]{MR1194953}}]\label{Prop:fund_properties_KR}\
 \begin{enumerate}
  \setlength{\parskip}{1pt} 
  \setlength{\itemsep}{1pt} 
  \item[(i)] $(\ , \ )$ is a nondegenerate prepolarization on $W^{r,\ell}$.
  \item[(ii)] $||R_\ell(w^{\otimes \ell})||^2 = 1$.
  \item[(iii)] $\big((W^{r,\ell})_{K_\Z}, (W^{r,\ell})_{K_\Z}\big) \subseteq K_\Z$.
 \end{enumerate}
\end{Prop}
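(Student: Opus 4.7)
The plan is to verify (i), (ii), and (iii) in order, handling (ii) as a direct computation, (i) by exploiting a duality property of $R_\ell$ with respect to the admissible pairing, and (iii) by tracking integrality through the fusion construction.

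For (ii), by the defining normalization of the $R$-matrix, $R(a,b)(w_a\otimes w_b) = w_b\otimes w_a$, so inductively $R_\ell$ sends $w^{\otimes\ell}$ to itself (read in reversed tensor order). Lemma \ref{Lem:admissible_pairing} realizes $(\ ,\ )_0$ as the tensor product of the polarizations on each $W(\varpi_r)$-factor, so
\[
\|R_\ell(w^{\otimes\ell})\|^2 \,=\, (w^{\otimes\ell},\, R_\ell w^{\otimes\ell})_0 \,=\, \|w\|^{2\ell} \,=\, 1,
\]
using $\|w\|^2 = 1$.

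For (i), let $\wti{B}(u,v) := (u, R_\ell v)_0$, viewed as a bilinear form on the source tensor product. Admissibility of $(\ ,\ )_0$ combined with the fact that $R_\ell$ is a $U_q'(\fg)$-homomorphism makes $\wti{B}$ an admissible pairing between the source and itself. Well-definedness of $(\ ,\ )$ on $W^{r,\ell}$ amounts to the inclusion $\Ker R_\ell \subseteq (\Img R_\ell)^\perp$ inside the target under $(\ ,\ )_0$. I would establish this by constructing a reverse normalized $R$-matrix $R_\ell^{\mathrm{rev}}$ from the target back to the source (via the fusion construction run backwards, with $R_\ell^{\mathrm{rev}}(w^{\otimes\ell}) = w^{\otimes\ell}$) and verifying an adjointness identity $(u, R_\ell v)_0 = (R_\ell^{\mathrm{rev}} u, v)_0'$ for a companion admissible pairing $(\ ,\ )_0'$ on the source. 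Since $W^{r,\ell} = \Img R_\ell$ is simple by \cite[Theorem 9.2]{MR1890649}, this forces the required orthogonality, and symmetry of the resulting form on $W^{r,\ell}$ follows from the same adjointness after identifying $R_\ell^{\mathrm{rev}}$ with $R_\ell$ under the canonical interchange of source and target. Admissibility of $(\ ,\ )$ on $W^{r,\ell}$ comes from directly unfolding \eqref{eq:admissible} and using $U_q'(\fg)$-equivariance of $R_\ell$; nondegeneracy holds because the radical is a $U_q'(\fg)$-submodule of the simple module $W^{r,\ell}$, and is a proper submodule by (ii).

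For (iii), the $U_q'(\fg)_{K_\Z}$-lattice $U_q'(\fg)_{K_\Z} w \subseteq W(\varpi_r)$ has a $K_\Z$-valued polarization, starting from $\|w\|^2 = 1$ and propagating via \eqref{eq:admissible} using $q_i^{\pm 1}\in K_\Z$ together with the fact that the Cartan factors $(t_i - t_i^{-1})/(q_i - q_i^{-1})$ act on a weight vector by a $K_\Z$-scalar. Taking tensor products yields a $K_\Z$-valued admissible pairing on $\bigotimes_i U_q'(\fg)_{K_\Z} w$, and the definition of $(W^{r,\ell})_{K_\Z}$ as the intersection of $R_\ell\bigl(\bigotimes_i U_q'(\fg)_{K_\Z} w\bigr)$ with $\bigotimes_i U_q'(\fg)_{K_\Z} w$ in the target immediately delivers the claim. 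The main obstacle is the adjointness identity underlying (i): verifying that $(\ ,\ )_0$ turns $R_\ell$ into (essentially) its own adjoint requires uniqueness (up to scalar) of normalized $R$-matrices at the specialized spectral parameters and a careful bookkeeping of normalizations, which ultimately rests on the theory of good modules developed in \cite{MR1890649}.
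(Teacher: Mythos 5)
The paper gives no proof of this proposition---it is quoted verbatim from \cite[Proposition 3.4.3]{MR1194953}---and your reconstruction follows essentially the same route as that source: the normalization $R_\ell(w^{\otimes\ell})=w^{\otimes\ell}$ for (ii), self-adjointness of $R_\ell$ with respect to the tensor-product admissible pairings for well-definedness and symmetry in (i) (with nondegeneracy from simplicity of $W^{r,\ell}$ plus (ii)), and $K_\Z$-integrality of the pairing on $U_q'(\fg)_{K_\Z}w$ propagated through the tensor product for (iii). The one point you leave as an ``obstacle,'' the adjointness identity, does reduce as you say to the uniqueness of the normalized $R$-matrix, which is exactly the content of the Lemma in Subsection 2.5 (a consequence of \cite[Proposition 9.3]{MR1890649} and the cyclicity of $W(\varpi_r)_{x_1}\otimes\cdots\otimes W(\varpi_r)_{x_\ell}$ over the extremal vector), so no genuinely new input is needed there.
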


\section{Main theorem}

\subsection{Statement of the main theorem}\label{Subsection:Main_Thm}

In the rest of this paper, we assume that $\fg$ is either of type $G_2^{(1)}$ or $D_4^{(3)}$, whose Dynkin diagram is as follows;
\begin{equation}\label{eq:Dynkin}
 (G_2^{(1)}) \ \ \xygraph{!~:{@3{->}}
    \circ ([]!{+(0,-.3)}{0}) -[r]
    \circ ([]!{+(0,-.3)} {1}) : [r]
    \circ ([]!{+(0,-.3)} {2})} \ \ \ \ \ \ \ \ 
   (D_4^{(3)}) \ \ \xygraph{!~:{@3{<-}}
    \circ ([]!{+(0,-.3)}{0}) -[r]
    \circ ([]!{+(0,-.3)} {1}) : [r]
    \circ ([]!{+(0,-.3)} {2})}
\end{equation}
Note that in both types $\fg_0$ is of type $G_2$, but \textit{the labeling of the simple roots are inverted}.

Our main theorem is the following.

\begin{Thm}\label{Thm:Main_Theorem}
  Assume that $\fg$ is either of type $G_2^{(1)}$ or $D_4^{(3)}$. For any $\ell \in \Z_{>0}$, the KR module $W^{2,\ell}$ has a crystal 
  pseudobase.
\end{Thm}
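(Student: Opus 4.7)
The plan is to apply Proposition \ref{Prop:criterion} with $M=W^{2,\ell}$, the prepolarization and $K_\Z$-submodule being those provided by Proposition \ref{Prop:fund_properties_KR}. The proof then splits into (a) identifying the $U_q(\fg_0)$-decomposition $W^{2,\ell}\cong\bigoplus_{k=1}^{m}V_0(\gl_k)$, which, unlike the nonexceptional case, may exhibit nontrivial $\fg_0$-multiplicities; and (b) constructing, for each summand, a vector $u_k\in(W^{2,\ell})_{K_\Z}$ of weight $\gl_k$ such that $(u_k,u_l)\in\gd_{kl}+qA$ and $||e_iu_k||^2\in q_i^{-2\langle h_i,\gl_k\rangle-2}qA$ for every $i\in I_0$.

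Step (a) can be read off from the known characters of KR modules in types $G_2^{(1)}$ and $D_4^{(3)}$ (via the Q-system or a fermionic formula), so the list $(\gl_1,\ldots,\gl_m)$, with its repetitions, is explicit. Step (b) will be carried out by induction on $\ell$. The base case $\ell=1$ reduces to the fundamental module $W(\varpi_2)$, for which a crystal base, and hence suitable $u_k$'s, is already guaranteed by Proposition \ref{Prop:fund.rep.}(iv).

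For the inductive step, the essential ingredient is a factorization of the fusion intertwiner $R_\ell$. Writing $\gs_0\in\fS_\ell$ as the product of the longest element of $\fS_{\ell-1}$ acting on the first $\ell-1$ tensor factors with a sequence of adjacent transpositions moving the remaining factor into place, one realizes $W^{2,\ell}$ as a submodule of a spectral-parameter twist of $W^{2,\ell-1}\otimes W^{2,1}$. Combined with Lemma \ref{Lem:admissible_pairing}, this expresses the prepolarization on $W^{2,\ell}$ as a pairing built from the prepolarizations on $W^{2,\ell-1}$ and $W^{2,1}$. Each $u_k$ will then be constructed, via this embedding, as a $K_\Z$-linear combination of tensors $u_j'\otimes v_s$, where $u_j'\in(W^{2,\ell-1})_{K_\Z}$ are the inductive vectors and the $v_s$ are controlled weight vectors of $W^{2,1}$. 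The required values $(u_k,u_l)$ and $||e_iu_k||^2$ thereby reduce, via \eqref{eq:admissible}, to sums of inductively available quantities weighted by $R$-matrix coefficients at the fusion points $q^{k(\ell-2j+1)}$; the algebraic identities collected in Subsection \ref{Subsection:equalities} are designed to do exactly this bookkeeping.

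The hard part will be step (b) at weights $\gl_k$ occurring with $U_q(\fg_0)$-multiplicity greater than one. There the vectors $u_k$ spanning an isotypic component must be chosen so that the full matrix of inner products is $\gd_{kl}+qA$, and simultaneously the leading $q$-order of $||e_iu_k||^2$ must hit the sharp exponent $q_i^{-2\langle h_i,\gl_k\rangle-2}$. Both conditions amount to engineering precise cancellations of leading terms in the fused expressions for $u_k$, and this is why the analysis in Subsections \ref{Subsection:G_2^{(1)}} and \ref{Subsection:D_4^{(3)}} must proceed type by type with explicit weight bases of $W(\varpi_2)$; the inductive framework reduces these subtle cancellations to a finite, if intricate, computation inside $W^{2,\ell-1}\otimes W^{2,1}$ for each of the two affine types.
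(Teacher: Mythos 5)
Your overall strategy coincides with the paper's: apply Proposition \ref{Prop:criterion} with the prepolarization and $K_\Z$-form coming from the fusion construction (Proposition \ref{Prop:fund_properties_KR}), read off the $U_q(\fg_0)$-decomposition from the fermionic formula (Proposition \ref{Prop:dec_G}), realize $W^{2,\ell}$ inside a spectral twist of $W^{2,\ell-1}\otimes W^{2,1}$ by factoring $R_\ell$, and induct on $\ell$. One point where the paper is simpler than you anticipate: the vectors are taken to be the single monomials $u_k=e^{(a,b,c,d)}v_\ell$ with $(a,b,c,d)\in S_\ell$, not $K_\Z$-linear combinations engineered to force cancellations; the orthogonality $(u_k,u_l)\in\gd_{kl}+qA$ then follows from weight and commutation arguments together with the inductive estimates.

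The genuine gap is in what your induction carries. If the inductive data at level $\ell-1$ are only the criterion's conditions (i)--(ii) for the vectors $u_j'$, the recursion does not close: expanding $e^{(a,b,c,d)}v_\ell$ and $e_ie^{(a,b,c,d)}v_\ell$ inside $(W^{2,\ell-1})_{q^{-1}}\otimes(W^{2,1})_{q^{\ell-1}}$ via the coproduct (Lemma \ref{Lem:direct_calc_coprod}) produces terms such as $e_2e^{(b,c,d-1)}v_{\ell-1}\otimes\,\cdot\,$, whose norms $||e_2e^{(b,c,d)}v_{\ell-1}||^2$ and cross pairings $(e^{(b-1,c-2,d-1)}v_{\ell-1},e_2e^{(b,c,d)}v_{\ell-1})$, $(e^{(b,c,d)}v_{\ell-1},e_2e^{(b,c-1,d)}v_{\ell-1})$ are \emph{not} controlled by conditions (i)--(ii) at level $\ell-1$, since the vectors involved are not $\fg_0$-highest-weight. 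The paper's induction hypothesis is therefore the strengthened package $\condC{1}{\ell-1}$--$\condC{6}{\ell-1}$ (resp.\ $\condD{1}{\ell-1}$--$\condD{6}{\ell-1}$), with $\condC{4}{\ell}$--$\condC{6}{\ell}$ supplying exactly these extra estimates; identifying a self-propagating list of such statements is the main technical content beyond the general framework, and your plan omits it. Correspondingly, the base case is not ``already guaranteed'' by the crystal base of $W(\varpi_2)$: the statements $\condC{4}{1}$--$\condC{6}{1}$ concern prepolarization values of non-highest-weight vectors and must be verified explicitly from the module structure of $W(\varpi_2)$, which is what Subsections \ref{Subsection:G_2^{(1)}} and \ref{Subsection:D_4^{(3)}} do.
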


Since the existence of a crystal pseudobase of $W^{1,\ell}$ for any $\ell \in \Z_{>0}$ has already been proved
by \cite{zbMATH01251404} in type $G_2^{(1)}$ and by \cite{zbMATH05226997} in type $D_4^{(3)}$, 
this theorem implies that every KR module of these types has a crystal pseudobase.


\subsection{Reduction to calculations on the prepolarization}\label{Subsection:Reduction}

Hereafter, we write $W^{\ell}$ for $W^{2,\ell}$.
In order to apply Proposition \ref{Prop:criterion}, let us recall the branching rule of $W^{\ell}$.
There is an explicit formula for the decomposition of general KR modules as $U_q(\fg_0)$-modules, called $(q=1)$ \textit{fermionic formula},
see \cite{MR1745263,MR1903978,MR1993360,MR2254805,MR2428305,MR2576287}. 
For $W^{\ell}$ in type $G_2^{(1)}$ and $D_4^{(3)}$, the formulas are rewritten as follows.

\begin{Prop}\label{Prop:dec_G}
 For a positive integer $\ell$, define a set $S_\ell \subseteq \Z_{\ge 0}^{\times 4}$ by
 \begin{equation*}
  S_\ell = \{(a,b,c,d)\in \Z_{\ge 0}^{\times 4}\mid 3b \leq c \leq b+d, \ a\leq b,\ -c+3d\leq\ell\} \ \ \text{if $\fg$ is of type $G_2^{(1)}$},
 \end{equation*}
 and
 \begin{equation*}
  S_\ell = \{(a,b,c,d)\in \Z_{\ge 0}^{\times 4}\mid 3c \leq b+d, \ a\leq b \leq c,\ -c+d \leq\ell\} \ \ \text{if $\fg$ is of type $D_4^{(3)}$}.
 \end{equation*}
 Then we have
 \[ W^{\ell} \cong \bigoplus_{(a,b,c,d)\in S_\ell} V_0\big(\ell\varpi_2+(a+d)\ga_0+(b+d)\ga_1+c\ga_2\big)
 \]
 as $U_q(\fg_0)$-modules. 
\end{Prop}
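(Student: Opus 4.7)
The plan is to derive Proposition~\ref{Prop:dec_G} from the fermionic formula for the $U_q(\fg_0)$-character of $W^{r,\ell}$, which in the affine types $G_2^{(1)}$ and $D_4^{(3)}$ has been established in the works cited in the introduction. That formula expresses $\ch W^{r,\ell}$ as a sum over configurations of non-negative integers $(m_i^{(k)})_{i \in I_0,\,k \geq 1}$, weighted by products of binomials of vacancy numbers $p_i^{(k)}$, of simple $U_q(\fg_0)$-characters with highest weight $\ell\varpi_r - \sum_{i,k} k\,m_i^{(k)} \alpha_i$. Since $\fg_0 = G_2$ has rank two and $r = 2$, what remains is to reparametrise the admissible configurations by $S_\ell$ and to verify that every binomial factor reduces to $1$.

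First I would unpack the fermionic formula in each of the two types, keeping careful track of the twist parameter $k$ introduced before Lemma~\ref{Lem:admissible_pairing} and of the fact that the labelling of the simple roots of $\fg_0$ is inverted between $G_2^{(1)}$ and $D_4^{(3)}$. Because the rank is two, only finitely many entries $m_i^{(k)}$ can be nonzero under $p_i^{(k)} \geq 0$, and a direct analysis should collapse the configuration data to a four-parameter family $a,b,c,d \in \Z_{\geq 0}$. Substituting into $\ell\varpi_2 - \sum_{i,k} k\,m_i^{(k)} \alpha_i$ and using the relation $\alpha_0 \equiv -\sum_{i > 0}(n_i/n_0)\alpha_i$ in $P_\cl$ (where $n_i$ are the marks of the affine Dynkin diagram), one should recover the highest weight in the form $\ell\varpi_2 + (a+d)\alpha_0 + (b+d)\alpha_1 + c\alpha_2$ given in the proposition.

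The remaining task is to match the vacancy-number inequalities with the conditions defining $S_\ell$: $3b \leq c \leq b+d$, $a \leq b$, $-c+3d \leq \ell$ in type $G_2^{(1)}$, and $3c \leq b+d$, $a \leq b \leq c$, $-c+d \leq \ell$ in type $D_4^{(3)}$, the difference reflecting the distinct marks and the inverted labelling of $\fg_0$. The main obstacle is the delicate combinatorial bookkeeping here: the vacancy numbers depend on the full Cartan data and differ between the two affine types, so one has to verify case by case both that $p_i^{(k)} + m_i^{(k)} \leq 1$ on each nonzero entry --- collapsing every binomial to $1$, so that each $(a,b,c,d) \in S_\ell$ contributes a single copy of the stated $V_0$ --- and that the translated inequalities recover exactly the defining conditions of $S_\ell$. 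Once this verification is carried out in each of the two types, Proposition~\ref{Prop:dec_G} follows directly from the fermionic formula.
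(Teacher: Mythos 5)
Your starting point---the fermionic formula for the $U_q(\fg_0)$-decomposition of $W^{2,\ell}$---is the same as the paper's, but your proposal stops exactly where the real work begins. The paper does not manipulate the raw configuration sum at all: it quotes the already-rewritten form of the fermionic formula from Chari--Moura \cite{MR2372556}, in which the decomposition is expressed as a multiplicity-one sum of irreducibles $V_0\big((r_2+r_3-r_4)\varpi_1+(\ell-r_1-3r_2-3r_3)\varpi_2\big)$ over an explicit four-parameter set $T_\ell$ (and similarly in type $D_4^{(3)}$), so that the proof reduces to a linear change of variables $(r_1,r_2,r_3,r_4)\leftrightarrow(a,b,c,d)$ together with the expressions of $\ga_0,\ga_1,\ga_2$ in terms of $\varpi_1,\varpi_2$. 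You instead propose to rederive such a four-parameter rewriting directly from the configurations $(m_i^{(k)})$ and vacancy numbers $p_i^{(k)}$, and you explicitly defer the ``delicate combinatorial bookkeeping'' that this requires. That bookkeeping is the entire content of the statement; without it the argument is a programme, not a proof.

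Moreover, the mechanism you propose for carrying it out looks wrong. You want every binomial $\binom{p_i^{(k)}+m_i^{(k)}}{m_i^{(k)}}$ to reduce to $1$ (via $p_i^{(k)}+m_i^{(k)}\le 1$ on nonzero entries), so that each admissible configuration contributes a single irreducible. But these KR modules are not multiplicity-free as $U_q(G_2)$-modules---the paper stresses this as the key difference from the nonexceptional case---and in the target formula the multiplicities are encoded by the fibers of the map $S_\ell\to P_0^+$, e.g.\ through the essentially free parameter $a$ with $0\le a\le b$ (corresponding to $r_4\le r_2$ in $T_\ell$); for instance $V_0\big((\ell-6)\varpi_2\big)$ occurs with multiplicity $3$ in type $G_2^{(1)}$ for $\ell\ge 12$. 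These multiplicities must come either from binomial factors exceeding $1$ or from several distinct configurations sharing a weight, so the claimed collapse cannot hold as stated and in any case the count would still have to be matched against $S_\ell$. To repair the argument, either carry out the full reduction of the configuration sum (a genuinely nontrivial computation), or simply invoke the rewritten branching rule of \cite{MR2372556} and verify the substitution, as the paper does.
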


\begin{proof}
 First assume that $\fg$ is of type $G_2^{(1)}$. 
 In \cite[Subsection 2.4]{MR2372556}, the fermionic formula of $W^{\ell}$ is rewritten as follows:
 \[ W^{\ell} \cong \bigoplus_{(r_1,r_2,r_3,r_4) \in T_\ell} V_0\big((r_2+r_3-r_4)\varpi_1+(\ell-r_1-3r_2-3r_3)\varpi_2\big),
 \]
 where $T_\ell \subseteq \Z_{\ge 0}^{\times 4}$ is defined by 
 \[ T_\ell= \{(r_1,r_2,r_3,r_4) \in \Z_{\ge 0}^{\times 4} \mid r_4 \leq r_2, \ 2r_1+3r_2+3r_3 \leq \ell\}.
 \]
 Then the assertion follows by the following substitution;
 \[ r_1 = -3b+c, \ \ \ r_2=a+b-c+d,\ \ \ r_3=-a+b,\ \ \ r_4=a,
 \]
 since $\ga_0= -\varpi_1$, $\ga_1=2\varpi_1 - 3\varpi_2$, $\ga_2 = -\varpi_1+2\varpi_2$.

 Next assume that $\fg$ is of type $D_4^{(3)}$. 
 In this case the branching rule given in \textit{loc.\ cit.} is as follows:
 \[ W^{\ell} \cong \bigoplus_{(r_1,r_2,r_3,r_4)\in T_\ell}V_0\big((r_1+r_2-r_3)\varpi_1+(\ell-r_1-r_2-r_4)\varpi_2\big)
 \]
 with $T_\ell = \{ (r_1,r_2,r_3,r_4)\in \Z_{\ge 0}^{\times 4} \mid r_3 \leq r_1, \ r_1+r_2+r_3+r_4 \leq \ell\}$.
 Then the assertion is proved via
 \[ r_1=-2c+d, \ \ \ r_2=-a+b,\ \ \ r_3=-b+c, \ \ \ r_4=a, 
 \]
 since $\ga_0=-\varpi_1$, $\ga_1=2\varpi_1-\varpi_2$, $\ga_2=-3\varpi_1+2\varpi_2$.
\end{proof}

For a quadruple $(a,b,c,d) \in \Z^{4}$ of integers,
set
\[ e^{(a,b,c,d)} = e_0^{(a)}e_1^{(b)}e_2^{(c)}e_1^{(d)}e_0^{(d)} \in U_q'(\fg),
\]
where we use the convention $e_i^{(k)} = 0$ for $k<0$.
Fix a vector $v_1 \in W(\varpi_r)$ with weight $\varpi_r$ and satisfying $(v_1,v_1) =1$ (which is denoted by $w$ in the previous section),
and for $\ell > 1$ let $v_\ell \in W^\ell$ denote the image of $v_1^{\otimes \ell}$ under $R_\ell$.
From Propositions \ref{Prop:criterion}, \ref{Prop:fund_properties_KR} and \ref{Prop:dec_G}, 
we easily see that the following proposition implies Theorem \ref{Thm:Main_Theorem}.

\begin{Prop}\label{Prop:hwv_G}
 Let $\ell$ be a positive integer, and $S_\ell \subseteq \Z_{\ge 0}^{\times 4}$ the set defined in Proposition \ref{Prop:dec_G}.
 Then the vectors $\{e^{(a,b,c,d)}v_\ell \mid (a,b,c,d) \in S_\ell\}$ in $W^{\ell}$ satisfy
 \begin{align}\label{eq:bilinear_value1}
  (e^{(a,b,c,d)}v_\ell,e^{(a',b',c',d')}v_\ell) &\in \gd_{(a,b,c,d),(a',b',c',d')} + qA,\\ \label{eq:bilinear_value2}
  ||e_ie^{(a,b,c,d)}v_\ell||^2 &\in q_i^{-2\langle h_i,\wt(e^{(a,b,c,d)}v_\ell)\rangle-2}q A \ \ \text{for } i=1,2.
 \end{align}
\end{Prop}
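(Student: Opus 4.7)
The plan is to prove Proposition \ref{Prop:hwv_G} by induction on $\ell$. For the base case $\ell = 1$, the vectors $e^{(a,b,c,d)} v_1$ lie in the fundamental module $W^1 = W(\varpi_2)$ and the index set $S_1$ is small, so both \eqref{eq:bilinear_value1} and \eqref{eq:bilinear_value2} can be verified by direct calculation on $W(\varpi_2)$ using its polarization from Proposition \ref{Prop:fund.rep.}(v). Since $W(\varpi_2)$ depends on the type of $\fg$, this base verification is carried out separately in Subsections \ref{Subsection:G_2^{(1)}} and \ref{Subsection:D_4^{(3)}}.

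For the inductive step, the idea is to relate the prepolarization on $W^\ell$ to those on $W^{\ell-1}$ and $W^1$ via the fusion construction. Factorizing the longest-element $R$-matrix as $R_\ell = R' \circ (R_{\ell-1} \otimes \id)$ for a suitable composition $R'$ of intertwiners moving the last tensor factor into position, one obtains (after an appropriate spectral parameter twist) a nonzero $U_q'(\fg)$-module homomorphism $\Phi \colon W^{\ell-1} \otimes W^1 \to W^\ell$ with $\Phi(v_{\ell-1} \otimes v_1) = c_\ell v_\ell$ for some $c_\ell \neq 0$, through which Lemma \ref{Lem:admissible_pairing} yields an explicit formula for the pairing on $W^\ell$ in terms of those on $W^{\ell-1}$ and $W^1$. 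Expanding
\[ e^{(a,b,c,d)}(v_{\ell-1} \otimes v_1) = \sum q^{\bullet} e^{(a_1,b_1,c_1,d_1)} v_{\ell-1} \otimes e^{(a_2,b_2,c_2,d_2)} v_1 \]
via the iterated coproduct \eqref{eq:coproduct}, the pairing $(e^{(a,b,c,d)} v_\ell, e^{(a',b',c',d')} v_\ell)$ then becomes a double combinatorial sum of products of pairings on $W^{\ell-1}$ and on $W^1$, each factor of which is controlled by the inductive hypothesis or the base case. The bound \eqref{eq:bilinear_value2} is handled in parallel: first, the identities assembled in Subsection \ref{Subsection:equalities} are used to rewrite $e_i e^{(a,b,c,d)}$ as a $\Q(q)$-linear combination of monomials of the form $e^{(\cdot)}$, and then \eqref{eq:bilinear_value1} is applied term by term.

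The main obstacle will be the fine tracking of $q$-orders. The inductive hypothesis places each individual term of the coproduct expansion only in $A$, whereas the off-diagonal case of \eqref{eq:bilinear_value1} requires the full sum to lie in $qA$; establishing this demands identifying precise cancellations among the leading contributions coming from different coproduct splittings. This is where the Serre-type and commutation relations among $e_0^{(k)}, e_1^{(k)}, e_2^{(k)}$ collected in Subsection \ref{Subsection:equalities} do the heavy lifting. Because the constants $k$, the values $q_i$, and the structure of $W^1$ depend on the type of $\fg$, the required cancellations must be verified separately in Subsections \ref{Subsection:G_2^{(1)}} and \ref{Subsection:D_4^{(3)}}, though the overall strategy is identical.
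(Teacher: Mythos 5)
Your overall architecture---induction on $\ell$, the factorization $R_\ell = R'\circ(R_{\ell-1}\otimes\id)$ giving a map from a spectral twist of $W^{\ell-1}\otimes W^1$ onto $W^\ell$, Lemma \ref{Lem:admissible_pairing} to transport the pairing, the coproduct expansion of $e^{(a,b,c,d)}(v_{\ell-1}\otimes v_1)$, and separate type-by-type verification---is exactly the paper's. But there is a genuine gap in your treatment of \eqref{eq:bilinear_value2}. You propose to rewrite $e_ie^{(a,b,c,d)}$ as a $\Q(q)$-linear combination of monomials $e^{(\cdot)}$ and then apply \eqref{eq:bilinear_value1} term by term. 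This fails for $i=2$: $e_2e_1^{(b)}$ cannot be straightened into the PBW-type words $e_0^{(\cdot)}e_1^{(\cdot)}e_2^{(\cdot)}e_1^{(\cdot)}e_0^{(\cdot)}$ (the $G_2$ Serre relation between $e_1$ and $e_2$ has length $4$), and the vectors $e_2e^{(b,c,d)}v_\ell$ genuinely lie outside the span of the $e^{(a',b',c',d')}v_\ell$. The paper's way around this is to strengthen the induction hypothesis: alongside the nonvanishing, norm and orthogonality statements $\condC{1}{\ell}$--$\condC{3}{\ell}$ it carries three further statements $\condC{4}{\ell}$--$\condC{6}{\ell}$ (resp.\ $\condD{4}{\ell}$--$\condD{6}{\ell}$) bounding $||e_2e^{(b,c,d)}v_\ell||^2$ and the cross pairings $(e^{(b-1,c-2,d-1)}v_\ell, e_2e^{(b,c,d)}v_\ell)$ and $(e^{(b,c,d)}v_\ell, e_2e^{(b,c-1,d)}v_\ell)$ through the induction; these extra invariants are exactly what is needed to close the inductive loop once $e_2$ is applied to the coproduct expansion. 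For $i=1$ the paper likewise does not reduce to \eqref{eq:bilinear_value1}: it passes to $f_1$ via \eqref{eq:prepolarization_calculation} and \eqref{eq:commutator}, reduces to showing $||f_1e^{(a,b,c,d)}v_\ell||^2\in A$, and handles that by a separate computation involving $f_0f_1e^{(b,c,d)}v_\ell$. Without some substitute for $\condC{4}{\ell}$--$\condC{6}{\ell}$ your induction does not close.

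A smaller point: your concern about ``precise cancellations among the leading contributions'' in the off-diagonal case of \eqref{eq:bilinear_value1} is misplaced. In the paper the off-diagonal pairing of the three-index vectors vanishes exactly (statement $\condC{3}{\ell}$ / $\condD{3}{\ell}$): in the coproduct expansion the $W^1$-components of distinct splittings have distinct weights, so the product pairing kills every cross term individually, and the surviving terms vanish by the inductive orthogonality on $W^{\ell-1}$---no cancellation between terms is required. The four-index case then reduces to the three-index one by commuting $f_0^{(a)}$ through, yielding exact vanishing when $a\neq a'$ and a $q$-binomial multiple of the three-index pairing when $a=a'$. The genuinely delicate $q$-order bookkeeping lives in the $e_2$-statements $\condC{4}{\ell}$--$\condC{6}{\ell}$, which your outline does not anticipate.
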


\subsection{Collection of equalities}\label{Subsection:equalities}

Here we collect equalities which are repeatedly used in the proof of Proposition \ref{Prop:hwv_G}.

The following equality is easily proved from (\ref{eq:coproduct}): for $i \in I$ and $k \in \Z_{\ge 0}$,
\begin{equation}\label{eq:div_coprod_e}
 \gD(e_i^{(k)}) = \sum_{j=0}^k q_i^{j(k-j)}e_i^{(k-j)} \otimes t_i^{-k+j}e_i^{(j)}.
\end{equation}
It is easily proved from the defining relations of $U_q'(\fg)$ that, for $i \in I$, $r,s\in \Z_{\ge 0}$,
\[ f_i^{(r)}e_i^{(s)} = \sum_{k=0}^{\min\{r,s\}} e_i^{(s-k)}f_i^{(r-k)} 
   \prod_{j=1}^{k} \frac{q_i^{r-s-j+1}t_i^{-1}-q_i^{-(r-s-j+1)}t_i}{q_i^j-q^{-j}_i}.
\]
In particular, if $M$ is a $U_q'(\fg)$-module we have
\begin{equation}\label{eq:commutator}
 f_i^{(r)}e_i^{(s)}v = \sum_{k=0}^{\min\{r,s\}} \begin{bmatrix} r-s-\langle h_i,\gl\rangle \\ k \end{bmatrix}_{q_i}e_i^{(s-k)}f_i^{(r-k)}v
 \ \ \ \text{for $v \in M_\gl$}.
\end{equation}
Assume that a given $U_q'(\fg)$-module $M$ has a prepolarization $(\ , \ )$.
Then for $i \in I$, $u,v \in M$ and $k \in \Z_{\ge 0}$, we have from (\ref{eq:admissible}) that
\[ (e_i^{(k)}u,v) = (u,q_i^{-k^2}t_i^{-k}f_i^{(k)}v),\ \ \ \ (f_i^{(k)}u,v)=(u, q_i^{-k^2}t_i^ke_i^{(k)}v).
\]
In particular, it holds that
\begin{equation}\label{eq:prepolarization_calculation}
 (e_i^{(k)}u,v) = q_i^{k(k-\langle h_i,\gl\rangle)}(u,f_i^{(k)}v),\ \ \ \ (f_i^{(k)}u,v)=q_i^{k(k+\langle h_i,\gl\rangle)}(u,e_i^{(k)}v)
   \ \ \ \text{for $v \in M_\gl$}.
\end{equation}
The following lemma is immediate from (\ref{eq:commutator}) and (\ref{eq:prepolarization_calculation}).

\begin{Lem}\label{Lem:Weyl_action}
 Let $M$ be a finite-dimensional $U_q'(\fg)$-module with a prepolarization $(\ ,\ )$, $u \in M_\gl$, $i \in I$ and assume that $f_iu=0$.
 Then
 \begin{align}\label{eq:Weyl_action}
  ||e_i^{(k)}u||^2=q_i^{-k(k+\langle h_i,\gl\rangle)}\begin{bmatrix}
  -\langle h_i,\gl\rangle\\k\end{bmatrix}_{q_i} ||u||^2.
 \end{align}
 In particular, $||e_i^{(k)}u||^2 \in (1+qA)||u||^2$ holds for $0\leq k \leq -\langle h_i, \gl \rangle.$
\end{Lem}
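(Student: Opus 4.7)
The plan is to unpack $||e_i^{(k)}u||^2=(e_i^{(k)}u,e_i^{(k)}u)$ by using the prepolarization to push one factor of $e_i^{(k)}$ across the pairing, reducing the problem to computing $(u,f_i^{(k)}e_i^{(k)}u)$; the latter is then evaluated via the commutator identity (\ref{eq:commutator}) together with the hypothesis $f_iu=0$.

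Since $e_i^{(k)}u\in M_{\gl+k\ga_i}$, applying the second identity in (\ref{eq:prepolarization_calculation}) with $v=e_i^{(k)}u$---and remembering that it is the weight of $v$, not of $u$, that governs the exponent---yields
\[ (e_i^{(k)}u,e_i^{(k)}u)=q_i^{k(k-\langle h_i,\gl+k\ga_i\rangle)}(u,f_i^{(k)}e_i^{(k)}u)=q_i^{-k(k+\langle h_i,\gl\rangle)}(u,f_i^{(k)}e_i^{(k)}u), \]
where the second equality uses $\langle h_i,\ga_i\rangle=2$. On the other side, expanding $f_i^{(k)}e_i^{(k)}u$ with the help of (\ref{eq:commutator}) (taking $r=s=k$ and $v=u\in M_\gl$) gives the sum $\sum_{j=0}^k \begin{bmatrix} -\langle h_i,\gl\rangle \\ j\end{bmatrix}_{q_i}e_i^{(k-j)}f_i^{(k-j)}u$, and the hypothesis $f_iu=0$ annihilates every term except $j=k$. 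Hence $f_i^{(k)}e_i^{(k)}u=\begin{bmatrix} -\langle h_i,\gl\rangle \\ k\end{bmatrix}_{q_i}u$, and substitution yields the desired equality (\ref{eq:Weyl_action}).

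For the ``in particular'' statement, I would invoke the standard fact that for integers $0\le k\le n$ the Gaussian binomial coefficient is a symmetric Laurent polynomial in $q_i$ with lowest-degree term $q_i^{-k(n-k)}$ and leading coefficient $1$, so that $\begin{bmatrix} n \\ k\end{bmatrix}_{q_i}\in q_i^{-k(n-k)}(1+q_iA)$. Specializing to $n=-\langle h_i,\gl\rangle$, the factor $q_i^{k(k+\langle h_i,\gl\rangle)}$ emerging from the Gaussian binomial exactly cancels the $q_i^{-k(k+\langle h_i,\gl\rangle)}$ from the prepolarization step, and the residual coefficient lies in $1+q_iA\subseteq 1+qA$, since $q_i=q^{(\ga_i,\ga_i)/2}$ with $(\ga_i,\ga_i)/2\geq 1$. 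No genuine obstacle is expected; the only step demanding care is the bookkeeping of $q_i$-exponents, which is what prompts the use of $\gl+k\ga_i$ rather than $\gl$ at the outset.
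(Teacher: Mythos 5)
Your argument is correct and is exactly the route the paper intends: the paper states the lemma is immediate from (\ref{eq:commutator}) and (\ref{eq:prepolarization_calculation}), and your proof combines precisely these two identities (with the correct use of the weight $\gl+k\ga_i$ in the adjunction step and the standard extremal-term fact about Gaussian binomials for the ``in particular'' clause). Nothing further is needed.
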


\subsection{Proof in type $G_2^{(1)}$ case}\label{Subsection:G_2^{(1)}}

Throughout this subsection we assume that $\fg$ is of type $G_2^{(1)}$,
and we will show Proposition \ref{Prop:hwv_G} in this type
(the type $D_4^{(3)}$ will be treated in the next subsection).
Note that 
\[ q_0=q_1=q^3,\ \ \ q_2=q.
\]

We write $e^{(b,c,d)}$ for $e^{(0,b,c,d)}=e_1^{(b)}e_2^{(c)}e_1^{(d)}e_0^{(d)}$.
For each $\ell \in \Z_{>0}$, let us consider the following collection of statements $\condC{1}{\ell}$--$\condC{6}{\ell}$ concerning 
the vector $v_\ell \in W^\ell$:
\begin{itemize}
 \item[$\condC{1}{\ell}$] $e^{(b,c,d)}v_\ell \neq 0$ if and only if $2b\leq c \leq 2d \leq 2\ell$.\\[-2pt]
 \item[$\condC{2}{\ell}$] $||e^{(b,c,d)}v_\ell||^2 \in 1+qA$ if $2b\leq c \leq 2d \leq 2\ell$.\\[-2pt]
 \item[$\condC{3}{\ell}$] For $(b,c,d),(b',c',d') \in \Z_{\ge 0}^{\times 3}$,
  $(e^{(b,c,d)}v_\ell,e^{(b',c',d')}v_\ell) =0$ if $(b,c,d) \neq (b',c',d')$.\\[-2pt]
 \item[$\condC{4}{\ell}$] For $(b,c,d) \in \Z_{\ge 0}^{\times 3}$, $||e_2e^{(b,c,d)}v_\ell||^2 \in q^{\min\{0,6b-2c,-2c+6d-2\ell\}}A$.\\[-2pt]
 \item[$\condC{5}{\ell}$] For $(b,c,d) \in \Z_{\ge 0}^{\times 3}$, $(e^{(b-1,c-2,d-1)}v_\ell,e_2e^{(b,c,d)}v_\ell) \in q^{-c+3d-\ell}A$.\\[-2pt]
 \item[$\condC{6}{\ell}$] For $(b,c,d) \in \Z_{\ge 0}^{\times 3}$, $(e^{(b,c,d)}v_\ell,e_2e^{(b,c-1,d)}v_\ell) \in q^{3b-c+1}A$.
\end{itemize}
In what follows, we will first show $\condC{1}{\ell}$--$\condC{6}{\ell}$ by the induction on $\ell$,
and then prove Proposition \ref{Prop:hwv_G} using them.

The fundamental module $W^{1}=W(\varpi_2)$ is isomorphic to $V_0(\varpi_2)$ as a $U_q(\fg_0)$-module 
and all the weight spaces are $1$-dimensional.
Then its $U_q'(\fg)$-module structure is automatically determined from the fact that $W(\varpi_2)$ has a global basis \cite{MR1890649}, 
and we describe the module structure explicitly here.
Let $W$ be the $7$-dimensional $\Q(q)$-vector space with a basis $\{\fbox{\raisebox{0pt}[7.5pt][0pt]{$j$}},
\fbox{\raisebox{0pt}[7.5pt][0pt]{$\ol{j}$}}\mid j=1,2,3\} \cup \{\fbox{\raisebox{0pt}[7.5pt][0pt]{$0$}}\}$.
Define a $U_q'(\fg)$-module structure on $W$ by
\begin{align*} 
 e_0\,\fbox{\raisebox{0pt}[7.5pt][0pt]{$1$}} &= \fbox{\raisebox{0pt}[7.5pt][0pt]{$\ol{2}$}}, \ 
 e_0\,\fbox{\raisebox{0pt}[7.5pt][0pt]{$2$}} = \fbox{\raisebox{0pt}[7.5pt][0pt]{$\ol{1}$}}, \text{ and } 
 e_0\,\fbox{\raisebox{1.5pt}[7.5pt][0pt]{$*$}} = 0 \text{ otherwise,}\\
 e_1\,\fbox{\raisebox{0pt}[7.5pt][0pt]{$3$}} &= \fbox{\raisebox{0pt}[7.5pt][0pt]{$2$}}, \ 
 e_1\,\fbox{\raisebox{0pt}[7.5pt][0pt]{$\ol{2}$}} = \fbox{\raisebox{0pt}[7.5pt][0pt]{$\ol{3}$}}, \text{ and }  
 e_1\,\fbox{\raisebox{1.5pt}[7.5pt][0pt]{$*$}} = 0 \text{ otherwise,}\\
 e_2\,\fbox{\raisebox{0pt}[7.5pt][0pt]{$2$}} &= \fbox{\raisebox{0pt}[7.5pt][0pt]{$1$}}, \ 
 e_2\,\fbox{\raisebox{0pt}[7.5pt][0pt]{$0$}} = [2]_{q}\fbox{\raisebox{0pt}[7.5pt][0pt]{3}}, \ 
 e_2\,\fbox{\raisebox{0pt}[7.5pt][0pt]{$\ol{3}$}} = \fbox{\raisebox{0pt}[7.5pt][0pt]{$0$}},\
 e_2\,\fbox{\raisebox{0pt}[7.5pt][0pt]{$\ol{1}$}} = \fbox{\raisebox{0pt}[7.5pt][0pt]{$\ol{2}$}}, \text{ and } 
 e_2\,\fbox{\raisebox{1.5pt}[7.5pt][0pt]{$*$}} = 0 \text{ otherwise,}\\
 f_0\,\fbox{\raisebox{0pt}[7.5pt][0pt]{$\ol{2}$}}&=\fbox{\raisebox{0pt}[7.5pt][0pt]{$1$}}, \ 
 f_0\,\fbox{\raisebox{0pt}[7.5pt][0pt]{$\ol{1}$}} = \fbox{\raisebox{0pt}[7.5pt][0pt]{$2$}}, \text{ and } 
 f_0\,\fbox{\raisebox{1.5pt}[7.5pt][0pt]{$*$}} = 0 \text{ otherwise,}\\
 f_1\,\fbox{\raisebox{0pt}[7.5pt][0pt]{$2$}} &= \fbox{\raisebox{0pt}[7.5pt][0pt]{$3$}}, \ 
 f_1\,\fbox{\raisebox{0pt}[7.5pt][0pt]{$\ol{3}$}} = \fbox{\raisebox{0pt}[7.5pt][0pt]{$\ol{2}$}}, \text{ and }  
 f_1\,\fbox{\raisebox{1.5pt}[7.5pt][0pt]{$*$}} = 0 \text{ otherwise.}\\
 f_2\,\fbox{\raisebox{0pt}[7.5pt][0pt]{$1$}}&= \fbox{\raisebox{0pt}[7.5pt][0pt]{$2$}}, \ 
 f_2\,\fbox{\raisebox{0pt}[7.5pt][0pt]{$3$}}=\fbox{\raisebox{0pt}[7.5pt][0pt]{$0$}}, \ 
 f_2\,\fbox{\raisebox{0pt}[7.5pt][0pt]{$0$}} = [2]_q\fbox{\raisebox{0pt}[7.5pt][0pt]{$\ol{3}$}}, \ 
 f_2\,\fbox{\raisebox{0pt}[7.5pt][0pt]{$\ol{2}$}} = \fbox{\raisebox{0pt}[7.5pt][0pt]{$\ol{1}$}}, \text{ and } 
 f_2\,\fbox{\raisebox{1.5pt}[7.5pt][0pt]{$*$}}=0 \text{ otherwise,}\\
 \wt(\fbox{\raisebox{0pt}[7.5pt][0pt]{$j$}})&=\gd_{j1}\varpi_2+\gd_{j2}(\varpi_1-\varpi_2)+\gd_{j3}(-\varpi_1+2\varpi_2),\ 
 \wt(\fbox{\raisebox{0pt}[7.5pt][0pt]{$\ol{j}$}}) = -\wt(\fbox{\raisebox{0pt}[7.5pt][0pt]{$j$}}).
\end{align*}
Then we have $W^{1} \cong W$ and identify $W^{1}$ with $W$ hereafter.
To illustrate the actions, we give the crystal graph of $W^{1}$ below:
\[ \begin{tikzcd}
    \fbox{\raisebox{0pt}[7.5pt][0pt]{$1$}} \arrow[r, "2"]&\fbox{\raisebox{0pt}[7.5pt][0pt]{$2$}} \arrow[r, "1"] &
    \fbox{\raisebox{0pt}[7.5pt][0pt]{$3$}} \arrow[r,"2"]& \fbox{\raisebox{0pt}[7.5pt][0pt]{$0$}}\arrow[r,"2"] &
    \fbox{\raisebox{0pt}[7.5pt][0pt]{$\ol{3}$}} \arrow[r,"1"] &
    \fbox{\raisebox{0pt}[7.5pt][0pt]{$\ol{2}$}} \arrow[lllll, bend right = 23, "0"] \arrow[r,"2"] & 
    \fbox{\raisebox{0pt}[7.5pt][0pt]{$\ol{1}$}}\arrow[lllll, bend left=20, "0"'].
   \end{tikzcd}
\]
Using Lemma \ref{Lem:Weyl_action}, we easily check that
\begin{equation}\label{bilinear_values}
 ||\,\fbox{\raisebox{0pt}[7.5pt][0pt]{$j$}}\,||^2 = 1 \ \ \text{for } j\in\{1,2,3,\ol{1},\ol{2},\ol{3}\},\ \ \ 
 ||\,\fbox{\raisebox{0pt}[7.5pt][0pt]{$0$}}\,||^2 = 1+q^2.
\end{equation}
Now $\condC{1}{1}$--$\condC{6}{1}$ are easily checked, and the induction begins.

Let $\ell > 1$, and assume that $\condC{1}{\ell-1}$--$\condC{6}{\ell-1}$ hold.
$W^{\ell-1}$ being a submodule of $W^1_{q^{2-\ell}} \otimes \cdots \otimes W^1_{q^{\ell-2}}$, 
there exists an injection from $(W^{\ell-1})_{q^{-1}}$ to
\[ \Big((W^1)_{q^{2-\ell}} \otimes \cdots \otimes (W^1)_{q^{\ell-2}}\Big)_{q^{-1}} \cong (W^1)_{q^{1-\ell}}\otimes \cdots
   \otimes (W^1)_{q^{\ell-3}}.
\]
Hence we have an injection
\[ (W^{\ell-1})_{q^{-1}} \otimes (W^1)_{q^{\ell-1}} \hookrightarrow (W^1)_{q^{1-\ell}}\otimes \cdots \otimes (W^1)_{q^{\ell-3}}\otimes 
   (W^1)_{q^{\ell-1}}.
\]
By the simplicity of $W^\ell$, we see that $W^{\ell}$ is isomorphic to the submodule of 
$(W^{\ell-1})_{q^{-1}} \otimes (W^1)_{q^{\ell-1}}$ generated by $(v_{\ell-1})_{q^{-1}} \otimes (v_1)_{q^{\ell-1}}$.
Hence for $(b,c,d) \in \Z_{\ge 0}^{\times 3}$, 
\begin{align}\label{eq:nonzero_condition}
  e^{(b,c,d)}v_\ell \neq 0 \ \ &\text{if and only if} \ \ e^{(b,c,d)}\big((v_{\ell-1})_{q^{-1}} \otimes (v_1)_{q^{\ell-1}}\big) \neq 0
   \text{ in } (W^{\ell-1})_{q^{-1}} \otimes (W^1)_{q^{\ell-1}}\nonumber\\
  &\text{if and only if} \ \ e^{(b,c,d)}\big(v_{\ell-1} \otimes (v_1)_{q^\ell}\big) \neq 0 \text{ in } W^{\ell-1} \otimes (W^1)_{q^\ell}.
\end{align}
The following lemma is proved by straightforward calculations using (\ref{eq:div_coprod_e}) (note that we have
$e_1^{(k)}e_0^{(m)}v_{\ell-1}=0$
for $k>m$ by the Serre relations).

\begin{Lem}\label{Lem:direct_calc_coprod}
 For any $m \in \Z$, we have
 \begin{align*}
  e^{(b,c,d)}\big(v_{\ell-1} \otimes (v_1)_{q^m}\big) &= q^{-c+3d}e^{(b,c,d)}v_{\ell-1}
   \otimes \fboxr{1} + q^{m}e^{(b-1,c-2,d-1)}v_{\ell-1} \otimes \fboxr{2}\\
   &+ q^{3b+m}e^{(b,c-2,d-1)}v_{\ell-1}\otimes \fboxr{3} + q^{c+m-1}e^{(b,c-1,d-1)}v_{\ell-1}\otimes \fboxr{0}\\
   &+q^{-3b+2c+m}e^{(b,c,d-1)}v_{\ell-1}\otimes \fboxr{\ol{3}}.
 \end{align*}
\end{Lem}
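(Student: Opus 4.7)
The plan is to compute $e^{(b,c,d)}(v_{\ell-1} \otimes (v_1)_{q^m})$ directly by applying the coproduct formula (\ref{eq:div_coprod_e}) four times, in the order $e_0^{(d)}$, $e_1^{(d)}$, $e_2^{(c)}$, $e_1^{(b)}$. At each stage I would leave the first tensor factor symbolic, as a product of divided powers acting on $v_{\ell-1}$, while resolving the second tensor factor explicitly using the $U_q'(\fg)$-module structure on $W^1$ tabulated in the excerpt. Because most of the arrows in the crystal graph vanish, the branching tree stays very small: Step 1 produces only $\fboxr{1}$ and $\fboxr{\ol{2}}$ (since $e_0^{(j)} v_1 = 0$ for $j \geq 2$); Step 2 collapses the $\fboxr{\ol{2}}$-branch into just $\fboxr{\ol{3}}$ via a Serre cancellation described below; Step 3 splits $\fboxr{\ol{3}}$ into $\fboxr{\ol{3}}, \fboxr{0}, \fboxr{3}$; and Step 4 sends $\fboxr{3}$ to either $\fboxr{3}$ (for $j=0$) or $\fboxr{2}$ (for $j=1$) while preserving the other labels, producing exactly the five summands of the lemma.

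The one nontrivial cancellation is the identity $e_1^{(k)} e_0^{(m)} v_{\ell-1} = 0$ for $k > m$ flagged in the parenthetical remark. I would establish it as follows: the $2\times 2$ submatrix of the Cartan matrix in rows and columns $\{0,1\}$ equals $\bigl(\begin{smallmatrix} 2 & -1 \\ -1 & 2 \end{smallmatrix}\bigr)$ with $q_0 = q_1 = q^3$, so $e_0, e_1$ satisfy the quantum Serre relations of type $A_2$; a direct calculation using these relations yields $e_1 e_{12} = q^3 e_{12} e_1$, where $e_{12} := e_1 e_0 - q^{-3} e_0 e_1$. A standard PBW-style rearrangement then expresses $e_1^{(k)} e_0^{(m)}$ as a linear combination of monomials $e_0^{(m-l)} e_{12}^{(l)} e_1^{(k-l)}$ with $l \leq \min(k,m)$; since $v_{\ell-1}$ is $U_q(\fg_0)$-highest, $e_1 v_{\ell-1} = 0$, and every summand has exponent $k-l \geq k-m > 0$, so each annihilates $v_{\ell-1}$. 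This identity is used exactly once, in Step 2: the $j=0$ summand of the $\fboxr{\ol{2}}$-branch contains the first-factor expression $e_1^{(d)} e_0^{(d-1)} v_{\ell-1}$ with $d > d-1$, and its vanishing is the sole reason $\fboxr{\ol{2}}$ fails to appear in the final result.

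The remainder is pure bookkeeping of $q$-powers, which is where I anticipate the main practical obstacle. The coproduct (\ref{eq:div_coprod_e}) contributes $q_i^{j(k-j)}$; each application of $e_0$ on the second tensor factor brings a $q^m$ from the spectral shift $(v_1)_{q^m}$; and each $t_i^{-k+j}$ acting on the second factor multiplies by $q_i^{(-k+j)\langle h_i,\wt(\cdot)\rangle}$ on the relevant basis vector. The necessary weight pairings can be read off from the identities $\alpha_0 = -\varpi_1$, $\alpha_1 = 2\varpi_1 - 3\varpi_2$, $\alpha_2 = -\varpi_1 + 2\varpi_2$ (recorded in the proof of Proposition \ref{Prop:dec_G}) together with $\langle h_i,\alpha_j\rangle = c_{ij}$; in particular $\langle h_0,\varpi_2\rangle = -1$, $\langle h_1,\varpi_2\rangle = 0$, $\langle h_2,\varpi_2\rangle = 1$, and analogues for the remaining six basis vectors of $W^1$. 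With these pairings and the $e_i$-actions tabulated in advance, each of the five coefficients $q^{-c+3d}, q^m, q^{3b+m}, q^{c+m-1}, q^{-3b+2c+m}$ reduces to a short sum of exponents, completing the proof.
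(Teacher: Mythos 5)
Your proposal is correct and is essentially the paper's own argument: the paper proves this lemma by exactly the same direct iterated application of the coproduct formula (\ref{eq:div_coprod_e}), invoking only the Serre-relation vanishing $e_1^{(k)}e_0^{(m)}v_{\ell-1}=0$ for $k>m$ (which you justify in more detail than the paper, which states it without proof). Your branching analysis $\fboxr{1}\to\fboxr{\ol{2}}\to\fboxr{\ol{3}}\to\{\fboxr{0},\fboxr{3}\}\to\fboxr{2}$, the single Serre cancellation in Step 2, and the bookkeeping of the five $q$-power coefficients all check out.
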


Now $\condC{1}{\ell}$ follows from this lemma, (\ref{eq:nonzero_condition}) and $\condC{1}{\ell-1}$.
Next we will show $\condC{2}{\ell}$.
By the definition, the composition of the maps
\begin{align*}
 (W^1)_{q^{\ell-1}} \otimes \cdots \otimes (W^1)_{q^{1-\ell}} &\stackrel{R_{\ell-1}'\otimes \id_{(W^1)_{q^{1-\ell}}}}{\to}
 (W^1)_{q^{3-\ell}}\otimes \cdots \otimes (W^1)_{q^{\ell-1}} \otimes (W^1)_{q^{1-\ell}}\\ &\stackrel{R_{s_1s_2\cdots s_{\ell-1}}}{\to}
 (W^1)_{q^{1-\ell}} \otimes \cdots \otimes (W^1)_{q^{\ell-1}}
\end{align*}
coincides with $R_\ell$, where 
\[R_{\ell-1}' \colon (W^1)_{q^{\ell-1}}\otimes \cdots \otimes (W^1)_{q^{3-\ell}} \to 
  (W^1)_{q^{3-\ell}} \otimes \cdots \otimes (W^1)_{q^{\ell-1}}
\]
is defined similarly to $R_{\ell-1}$.
Since the image of the first map is $(W^{\ell-1})_q \otimes (W^1)_{q^{1-\ell}}$ and that of $R_\ell$ is $W^{\ell}$,
the second map induces a $U_q'(\fg)$-module homomorphism
\[ R' \colon (W^{\ell-1})_q \otimes (W^1)_{q^{1-\ell}} \to W^\ell.
\]
Let $(\ , \ )_1$ be the admissible pairing between $(W^{\ell-1})_q \otimes (W^1)_{q^{1-\ell}}$ and $(W^{\ell-1})_{q^{-1}} \otimes 
(W^1)_{q^{\ell-1}}$ which is obtained by Lemma \ref{Lem:admissible_pairing}.
Now the following lemma is obvious from the construction of the prepolarization $(\ , \ )$ on $W^\ell$.

\begin{Lem}\label{Lem:bilinear_induction}
 For $u, v \in (W^{\ell-1})_q \otimes (W^1)_{q^{1-\ell}}$, we have
 \[ \big(R'(u),R'(v)\big) = \big(u,R'(v)\big)_1,
 \]
 where $R'(v)$ in the right-hand side is regarded as a vector of $(W^{\ell-1})_{q^{-1}} \otimes 
(W^1)_{q^{\ell-1}}$ via the embedding $W^\ell \hookrightarrow (W^{\ell-1})_{q^{-1}} \otimes (W^1)_{q^{\ell-1}}$ mapping $v_\ell$ to 
$(v_{\ell-1})_{q^{-1}} \otimes (v_1)_{q^{\ell-1}}$.
\end{Lem}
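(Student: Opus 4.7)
The plan is to unfold the definitions, making essential use of the factorization $R_\ell = R_{s_1 s_2 \cdots s_{\ell-1}} \circ (R'_{\ell-1} \otimes \id)$ introduced just before the lemma. Given $u, v \in (W^{\ell-1})_q \otimes (W^1)_{q^{1-\ell}}$, I would first choose lifts $\tilde u, \tilde v \in (W^1)_{q^{\ell-1}} \otimes \cdots \otimes (W^1)_{q^{1-\ell}}$ with $(R'_{\ell-1} \otimes \id)(\tilde u) = u$ and $(R'_{\ell-1} \otimes \id)(\tilde v) = v$; this is possible since $R'_{\ell-1}$ surjects onto $(W^{\ell-1})_q$. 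The factorization gives $R'(u) = R_\ell(\tilde u)$ and $R'(v) = R_\ell(\tilde v)$, and the defining relation $(R_\ell x, R_\ell y)_{W^\ell} = (x, R_\ell y)_0$ of the prepolarization on $W^\ell$ yields
\[
(R'(u), R'(v))_{W^\ell} = (\tilde u, R_\ell \tilde v)_0 = (\tilde u, R'(v))_0.
\]
The lemma therefore reduces to the identity $(\tilde u, R'(v))_0 = (u, R'(v))_1$ whenever $(R'_{\ell-1}\otimes\id)(\tilde u) = u$ and $R'(v)$ is regarded inside $(W^{\ell-1})_{q^{-1}} \otimes (W^1)_{q^{\ell-1}}$ via the stated embedding.

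Next I would exploit the tensor product structure of both pairings. Decompose $\tilde u = \tilde u' \otimes \tilde u''$ with $\tilde u' \in (W^1)_{q^{\ell-1}} \otimes \cdots \otimes (W^1)_{q^{3-\ell}}$ and $\tilde u'' \in (W^1)_{q^{1-\ell}}$, and write $R'(v) = y' \otimes y''$ with $y' \in (W^{\ell-1})_{q^{-1}} \subset (W^1)_{q^{1-\ell}} \otimes \cdots \otimes (W^1)_{q^{\ell-3}}$ and $y'' \in (W^1)_{q^{\ell-1}}$. Since $(\ , \ )_0$ is a tensor product of polarizations on the individual fundamental factors, it splits as
\[
(\tilde u, R'(v))_0 = (\tilde u', y')_0' \cdot (\tilde u'', y'')_{W^1},
\]
where $(\ , \ )_0'$ denotes the tensor product polarization on the first $\ell-1$ factors. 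The second factor matches the $W^1$ piece of $(u, R'(v))_1$ exactly. For the first factor, set $u_1 = R'_{\ell-1}(\tilde u')$ and $u_2 = \tilde u''$, so that $u = u_1 \otimes u_2$. I would then invoke the defining relation of the prepolarization on $W^{\ell-1}$ from Proposition \ref{Prop:fund_properties_KR}(i), transported by the spectral parameter shift, to obtain $(\tilde u', y')_0' = B(u_1, y')$, where $B$ denotes the prepolarization on $W^{\ell-1}$ read off on the shifted modules $(W^{\ell-1})_q$ and $(W^{\ell-1})_{q^{-1}}$. This is precisely the $(W^{\ell-1})_q \times (W^{\ell-1})_{q^{-1}}$ factor of $(u, R'(v))_1$, so assembling the two pieces delivers the identity.

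The argument is essentially formal, and the hard part will be notational rather than conceptual: tracking the spectral parameter shifts and verifying that all pairings and $R$-matrices are compatible with them. Concretely, I would need to observe that the polarization pairing on $W^1$ between $(W^1)_{q^{\ell-1-2j}}$ and $(W^1)_{q^{1-\ell+2j}}$ appears both in $(\ , \ )_0'$ and in the shifted pairing between $(W^{\ell-1})_q$ and $(W^{\ell-1})_{q^{-1}}$ that comes from the fusion construction of $W^{\ell-1}$. Once this shift-covariance is made explicit, the identity $B(R'_{\ell-1}(\xi), \mu) = (\xi, \mu)_0'$ for $\mu \in (W^{\ell-1})_{q^{-1}}$ is just the defining relation of the prepolarization on $W^{\ell-1}$ read in the shifted setting, and the lemma falls out.
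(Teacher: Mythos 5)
Your argument is correct and is exactly the verification the paper has in mind: the paper simply declares the lemma ``obvious from the construction of the prepolarization on $W^\ell$,'' and your proof unfolds that construction via the factorization $R_\ell = R_{s_1\cdots s_{\ell-1}}\circ(R'_{\ell-1}\otimes\id)$, the defining relation $(R_\ell x,R_\ell y)=(x,R_\ell y)_0$, and the tensor-factorization of the admissible pairings, which is precisely why the statement is immediate. The only point worth making explicit is the well-definedness of the left-hand side independent of the chosen lifts, which is already guaranteed by Proposition \ref{Prop:fund_properties_KR}.
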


Denote by $(\ , \ )_2$ the admissible pairing between $W^{\ell-1}\otimes (W^1)_{q^\ell}$ and $W^{\ell-1}\otimes (W^1)_{q^{-\ell}}$.
For $(b,c,d) \in \Z_{\ge 0}^{\times 3}$, we have from Lemma \ref{Lem:bilinear_induction} that
\begin{align*}
  ||e^{(b,c,d)}v_\ell||^2 &= \Big(e^{(b,c,d)}\big((v_{\ell-1})_q \otimes (v_1)_{q^{1-\ell}}\big), e^{(b,c,d)}\big((v_{\ell-1})_{q^{-1}}
   \otimes (v_1)_{q^{\ell-1}}\big)\Big)_1\\
   &= \Big(e^{(b,c,d)}\big(v_{\ell-1} \otimes (v_1)_{q^{-\ell}}\big), e^{(b,c,d)}\big(v_{\ell-1}
   \otimes (v_1)_{q^{\ell}}\big)\Big)_2,
\end{align*}
and the right-hand side is equal to 
\begin{align*}
 q^{-2c+6d}||e^{(b,c,d)}v_{\ell-1}||^2 &+ ||e^{(b-1,c-2,d-1)}v_{\ell-1}||^2+q^{6b}||e^{(b,c-2,d-1)}v_{\ell-1}||^2\\
  &+q^{2c-1}[2]_q||e^{(b,c-1,d-1)}v_{\ell-1}||^2 + q^{-6b+4c}||e^{(b,c,d-1)}v_{\ell-1}||^2
\end{align*}
by Lemma \ref{Lem:direct_calc_coprod} and (\ref{bilinear_values}).
Now $\condC{2}{\ell}$ is easily proved from $\condC{2}{\ell-1}$.
$\condC{3}{\ell}$ is shown in a similar manner by calculating $(e^{(b,c,d)}v_\ell,e^{(b',c',d')}v_\ell)$ 
using Lemmas \ref{Lem:direct_calc_coprod},
\ref{Lem:bilinear_induction} and applying $\condC{3}{\ell-1}$.

Next we will show $\condC{4}{\ell}$. 
By $\condC{1}{\ell}$ we may assume $2b \leq c \leq 2d \leq 2\ell$.
When $b=0$, $\condC{2}{\ell}$ implies $||e_2e^{(0,c,d)}v_\ell||^2 \in q^{-2c}A$, and $\condC{4}{\ell}$ holds.
Hence we may further assume that $b > 0$.
Writing $v =v_{\ell-1}$ for short, we have from Lemma \ref{Lem:direct_calc_coprod} that
\begin{align}
 e_2&e^{(b,c,d)}\big(v \otimes (v_1)_{q^m}\big)\nonumber\\
 = &(q^{-c+3d-1}e_2e^{(b,c,d)}v +q^me^{(b-1,c-2,d-1)}v)\otimes \fboxr{1}
 +q^{m+1}e_2e^{(b-1,c-2,d-1)}v\otimes \fboxr{2}\nonumber\\
 +& q^m(q^{3b-2}e_2e^{(b,c-2,d-1)}v+q^{c-1}[2]_{q}e^{(b,c-1,d-1)}v)\otimes  \fboxr{3}\label{eq:direct_cal2}\\
 +&q^m(q^{c-1}e_2e^{(b,c-1,d-1)}v+q^{-3b+2c}e^{(b,c,d-1)}v)\otimes \fboxr{0}
 + q^{-3b+2c+m+2}e_2e^{(b,c,d-1)}v\otimes \fboxr{\ol{3}},\nonumber
\end{align}
and hence we have from Lemma \ref{Lem:bilinear_induction} that
\begin{align}
 &||e_2e^{(b,c,d)}v_\ell||^2=q^{-c+3d-1}(q^\ell+q^{-\ell})(e^{(b-1,c-2,d-1)}v,e_2e^{(b,c,d)}v)\label{eq:e2-bilinear1}\\
 &+||e^{(b-1,c-2,d-1)}v||^2+q^{2c-2}[2]_q^2||e^{(b,c-1,d-1)}v||^2+q^{-6b+4c+1}[2]_q||e^{(b,c,d-1)}v||^2\label{eq:e2-bilinear2}\\
 &+ 2[2]_q\Big(q^{3b+c-3}(e^{(b,c-1,d-1)}v,e_2e^{(b,c-2,d-1)}v)+q^{-3b+3c}(e^{(b,c,d-1)}v,e_2e^{(b,c-1,d-1)}v)\Big)\label{eq:e2-bilinear3}\\
 &\begin{aligned}
 &+q^{-2c+6d-2}||e_2e^{(b,c,d)}v||^2+q^2||e_2e^{(b-1,c-2,d-1)}v||^2+q^{6b-4}||e_2e^{(b,c-2,d-1)}v||^2\\
 &\hspace{100pt}+q^{2c-1}[2]_q||e_2e^{(b,c-1,d-1)}v||^2+q^{-6b+4c+4}||e_2e^{(b,c,d-1)}v||^2.
 \end{aligned}\label{eq:e2-bilinear4}
\end{align}
We need to show that all (\ref{eq:e2-bilinear1})--(\ref{eq:e2-bilinear4}) 
belong to $q^{\min\{0,6b-2c,-2c+6d-2\ell\}}A$.
$\condC{5}{\ell-1}$ implies $\text{(\ref{eq:e2-bilinear1})} \in q^{-2c+6d-2\ell}A$,
and $\condC{2}{\ell-1}$ implies $\text{(\ref{eq:e2-bilinear2})} \in A$ since we are assuming $2\leq 2b\leq c$.
It is easily proved from $\condC{6}{\ell-1}$ that $\text{(\ref{eq:e2-bilinear3})} \in A$.
Finally let us show that $\text{(\ref{eq:e2-bilinear4})} \in q^{\min\{0,6b-2c,-2c+6d-2\ell\}}A$.
Since $2\leq c\leq 2d$, it follows from $\condC{4}{\ell-1}$ that
\[ q^{-2c+6d-2}||e_2e^{(b,c,d)}v||^2 \in q^{c-2}q^{\min\{0,6b-2c,-2c+6d-2\ell+2\}}A \subseteq q^{\min\{0,6b-2c,-2c+6d-2\ell\}}A.
\]
It is proved by the same argument that all the other terms of (\ref{eq:e2-bilinear4}) also belong to $q^{\min\{0,6b-2c,-2c+6d-2\ell\}}A$,
and hence $\condC{4}{\ell}$ holds.

The proof of $\condC{5}{\ell}$ is similar. Writing $v=v_{\ell-1}$, 
we have from Lemmas \ref{Lem:direct_calc_coprod}, \ref{Lem:bilinear_induction}, and (\ref{eq:direct_cal2}) that
\begin{align*}
 (&e^{(b-1,c-2,d-1)}v_\ell,e_2e^{(b,c,d)}v_\ell)\\
  &= \Big(e^{(b-1,c-2,d-1)}\big((v)_q\otimes (v_1)_{q^{1-\ell}}\big),e_2e^{(b,c,d)}
  \big((v)_{q^{-1}} \otimes (v_1)_{q^{\ell-1}}\big)\Big)_1\\
 &=q^{-1}\Big(e^{(b-1,c-2,d-1)}\big(v\otimes (v_1)_{q^{-\ell}}\big),e_2e^{(b,c,d)}\big(v\otimes (v_1)_{q^{\ell}}\big)\Big)_2\\
 &=q^{-2c+6d-3}(e^{(b-1,c-2,d-1)}v,e_2e^{(b,c,d)}v)+q^{-c+3d+\ell-2}||e^{(b-1,c-2,d-1)}v||^2\\
 &+(e^{(b-2,c-4,d-2)}v,e_2e^{(b-1,c-2,d-1)}v)+q^{6b-6}(e^{(b-1,c-4,d-2)}v,e_2e^{(b,c-2,d-1)}v)\\
 &+q^{3b+c-5}[2]_q(e^{(b-1,c-4,d-2)}v,e^{(b,c-1,d-1)}v)+q^{2c-4}[2]_q(e^{(b-1,c-3,d-2)}v,e_2e^{(b,c-1,d-1)}v)\\
 &+q^{-3b+3c-3}[2]_q(e^{(b-1,c-3,d-2)}v,e^{(b,c,d-1)}v))+q^{-6b+4c}(e^{(b-1,c-2,d-2)}v,e_2e^{(b,c,d-1)}v).
\end{align*}
All the terms are easily seen to belong to $q^{-c+3d-\ell}A$ by applying $\condC{1}{\ell-1}$--$\condC{3}{\ell-1}$ and $\condC{5}{\ell-1}$,
and hence $\condC{5}{\ell}$ holds.
The proof of $\condC{6}{\ell}$ is similar, and we omit the details.

Now let us prove Proposition \ref{Prop:hwv_G} by using $\condC{1}{\ell}$--$\condC{4}{\ell}$.
Let $(a,b,c,d), (a',b',c',d') \in S_\ell$, and assume first that $a\neq a'$.
We may assume $a>a'$, and then by (\ref{eq:prepolarization_calculation}) we have
\[ (e^{(a,b,c,d)}v_\ell,e^{(a',b',c',d')}v_\ell) = q^{3a(a-2a'+b'-d'+\ell)}(e^{(b,c,d)}v_\ell,f_0^{(a)}e^{(a',b',c',d')}v_\ell) = 0,
\]
since $f_0v_\ell = 0$ and $e_1^{(d')}e_0^{(d'')}v_\ell=0$ if $d''<d'$.
Therefore (\ref{eq:bilinear_value1}) holds in this case.
On the other hand, if $a=a'$, it follows from (\ref{eq:commutator}) and (\ref{eq:prepolarization_calculation}) that
\begin{align*}
 (e^{(a,b,c,d)}v_\ell,e^{(a',b',c',d')}v_\ell)&=q^{3a(-a+b'-d'+\ell)}(e^{(b,c,d)}v_\ell,f_0^{(a)}e^{(a',b',c',d')}v_\ell)\\
                                              &=q^{3a(-a+b'-d'+\ell)}\begin{bmatrix} b'-d'+\ell \\ a\end{bmatrix}_{q^3}
 (e^{(b,c,d)}v_\ell,e^{(b',c',d')}v_\ell),
\end{align*}
and hence (\ref{eq:bilinear_value1}) holds by $\condC{2}{\ell}$ and $\condC{3}{\ell}$.

Assume that $(a,b,c,d) \in S_\ell$. By Lemma \ref{Lem:Weyl_action} and $\condC{4}{\ell}$, we have
\begin{align*}
 ||e_2e^{(a,b,c,d)}v_\ell||^2 = ||e_0^{(a)}e_2e^{(b,c,d)}v_\ell||^2 \in (1+qA)||e_2e^{(b,c,d)}v_\ell||^2 
  \subseteq q^{\min\{6b-2c,-2c+6d-2\ell\}}A,
\end{align*}
which implies (\ref{eq:bilinear_value2}) with $i = 2$. 
Finally it remains to show (\ref{eq:bilinear_value2}) with $i = 1$, namely,
\begin{equation*}
 ||e_1e^{(a,b,c,d)}v_\ell||^2 \in q^{6(a-2b+c-d-1)}qA, 
\end{equation*} 
which needs a little bit more calculations. 
By applying (\ref{eq:prepolarization_calculation}) and (\ref{eq:commutator}), we have
\begin{align*}
 ||&e_1e^{(a,b,c,d)}v_\ell||^2= q^{3(a-2b+c-d-1)}(e^{(a,b,c,d)}v_\ell, f_1e_1e^{(a,b,c,d)}v_\ell)\nonumber \\
 &= q^{3(a-2b+c-d-1)}\Big([a-2b+c-d]_{q^3}||e^{(a,b,c,d)}v_\ell||^2 + (e^{(a,b,c,d)}v_\ell, e_1f_1e^{(a,b,c,d)}v_\ell)\Big)\nonumber \\
 &\equiv q^{6(a-2b+c-d)}||f_1e^{(a,b,c,d)}v_\ell||^2 \ \ (\text{mod } q^{6(a-2b+c-d-1)}qA).
\end{align*}
Hence it suffices to show that $||f_1e^{(a,b,c,d)}v_\ell||^2 \in A$.
Set $p=b-d+\ell$. Note that $(a,b,c,d) \in S_\ell$ implies $p>0$. 
Since $f^{(k)}_0f_1e^{(b,c,d)}v_\ell=0$ ($k>1$) by the Serre relations, we have
\begin{align}
 ||&f_1e^{(a,b,c,d)}v_\ell||^2=q^{3a(p-a-1)}(f_1e^{(b,c,d)}v_\ell, f_0^{(a)}e_0^{(a)}f_1e^{(b,c,d)}v_\ell)\nonumber\\
 &=q^{3a(p-a-1)}\Big(\begin{bmatrix} p-1\\ a\end{bmatrix}_{q^3}||f_1e^{(b,c,d)}v_\ell||^2
 + \begin{bmatrix} p-1\\ a-1\end{bmatrix}_{q^3}(f_1e^{(b,c,d)}v_\ell, e_0f_0f_1e^{(b,c,d)}v_\ell)\Big)\nonumber \\
 &\in ||f_1e^{(b,c,d)}v_\ell||^2A+q^{6(p-a)}||f_0f_1e^{(b,c,d)}v_\ell||^2A.\label{eq:f1e}
\end{align}
It follows that
\begin{align*}
 ||&f_1e^{(b,c,d)}v_\ell||^2 = q^{6b-3c+3d-3}(e^{(b,c,d)}v_\ell,e_1f_1e^{(b,c,d)}v_\ell)\\
   &=q^{6b-3c+3d-3}\Big([2b-c+d]_{q^3}||e^{(b,c,d)}v_\ell||^2+(e^{(b,c,d)}v_\ell,f_1e_1e^{(b,c,d)}v_\ell)\Big)\\
   &=q^{6b-3c+3d-3}[2b-c+d]_{q^3}||e^{(b,c,d)}v_\ell||^2+q^{12b-6c+6d}[b+1]_{q^3}^2||e^{(b+1,c,d)}v_\ell||^2\in A.
\end{align*}
Moreover, since
\[ f_0f_1e^{(b,c,d)}v_\ell = f_0 \Big(e_1^{(b)}e_2^{(c)}e_1^{(d-1)}e_0^{(d)}v_\ell + \ga e^{(b-1,c,d)}v_\ell\Big)
   = [\ell-d+1]_{q^3}e^{(b,c,d-1)}v_\ell
\]
where $\ga \in \C(q)$ is a certain element, we have 
\[ q^{6(p-a)}||f_0f_1e^{(b,c,d)}v_\ell||^2 =q^{6(p-a)}[\ell-d+1]_{q^3}^2||e^{(b,c,d-1)}v_\ell||^2\in  A. 
\]
Now (\ref{eq:f1e}) implies $||f_1e^{(a,b,c,d)}v_\ell||^2 \in A$, as required.
The proof of Proposition \ref{Prop:hwv_G} is complete.

\subsection{Proof in type $D_4^{(3)}$ case}\label{Subsection:D_4^{(3)}}

Throughout this subsection we assume that $\fg$ is of type $D_4^{(3)}$,
and we will show Proposition \ref{Prop:hwv_G} in this type.
A large part of the proof is parallel to the case of $G_2^{(1)}$.
Note that 
\[ q_0=q_1=q, \ \ \ q_2=q^3.
\]

Let us consider the following collection of statements:
\begin{itemize}
 \item[$\condD{1}{\ell}$] $e^{(b,c,d)}v_\ell \neq 0$ if and only if $2b\leq 3c \leq 2d \leq 6\ell$.\\[-2pt]
 \item[$\condD{2}{\ell}$] $||e^{(b,c,d)}v_\ell||^2 \in 1+qA$ if $2b\leq 3c \leq 2d \leq 6\ell$.\\[-2pt]
 \item[$\condD{3}{\ell}$] For $(b,c,d),(b',c',d') \in \Z_{\ge 0}^{\times 3}$,
  $(e^{(b,c,d)}v_\ell,e^{(b',c',d')}v_\ell) =0$ if $(b,c,d) \neq (b',c',d')$.\\[-2pt]
 \item[$\condD{4}{\ell}$] For $(b,c,d) \in \Z_{\ge 0}^{\times 3}$, $||e_2e^{(b,c,d)}v_\ell||^2 \in 
  \begin{cases} q^{\min\{0,6b-6c\}}A & (b<3),\\
                q^{\min\{0,6b-6c,-6c+6d-6\ell\}}A & (b\geq 3).
  \end{cases}$\\
 \item[$\condD{5}{\ell}$] For $(b,c,d) \in \Z_{\ge 0}^{\times 3}$, $(e^{(b-3,c-2,d-3)}v_\ell,e_2e^{(b,c,d)}v_\ell) \in q^{-3c+3d-3\ell}A$.
  \\[-2pt]
 \item[$\condD{6}{\ell}$] For $(b,c,d) \in \Z_{\ge 0}^{\times 3}$, $(e^{(b,c,d)}v_\ell,e_2e^{(b,c-1,d)}v_\ell) \in q^{3b-3c+3}A$.
\end{itemize}

First let us show $\condD{1}{1}$--$\condD{6}{1}$ (unlike $G_2^{(1)}$ case, we do not give the explicit module structure here).
Write $w=v_1$.
$\condD{2}{1}$ follows from Lemma \ref{Lem:Weyl_action} except for the case $(b,c,d)=(1,1,3)$,
in which $\condD{2}{1}$ is proved by the following calculation:
\begin{align*}
 ||e^{(1,1,3)}w||^2&= q^{-1}(e^{(0,1,3)}w,f_1e^{(1,1,3)}w)=q^{-1}(e^{(0,1,3)}w, e_1e_2e_1^{(2)}e_0^{(3)}w)\\
  &=(f_1e^{(0,1,3)}w,e_2e_1^{(2)}e_0^{(3)}w)= ||e_2e_1^{(2)}e_0^{(3)}w||^2 \in 1+qA.
\end{align*}
Then to show $\condD{1}{1}$, it suffices to check that $e^{(b,c,d)}w = 0$ unless $2b\leq 3c \leq 2d \leq 6$.
By the Serre relations we have
\begin{align*}
 e^{(2,1,2)}w &= e_1^{(2)}e_2e_1^{(2)}e_0^{(2)}w=\sum_{k=0,1,3,4}
   (-1)^{k+1}e_1^{(k)}e_2e_1^{(4-k)}e_0^{(2)}w=e_1^{(3)}e_2e_1e_0^{(2)}w\\
 &= e_1^{(3)}e_2e_0e_1e_0w-e_1^{(3)}e_2e_0^{(2)}e_1w=0,
\end{align*}
and in all the other cases $e^{(b,c,d)}w=0$ is proved from Proposition \ref{Prop:fund.rep.} (iii) and $e_1^{(k)}e_0^{(m)}w=0$ for $k>m$.
Hence $\condD{1}{1}$ holds.
In order to show $\condD{3}{1}$, it is enough to check by the weight consideration that 
\begin{align*}
  (e^{(0,0,1)}w,e^{(1,1,2)}w) &= (e^{(0,0,1)}w,e^{(2,2,3)}w)=(e^{(1,1,2)}w,e^{(2,2,3)}w)\\
  &=(e^{(0,0,2)}w,e^{(1,1,3)}w) =(e^{(0,1,2)}w,e^{(1,2,3)}w)=0,
\end{align*}
and these are all proved by applying (\ref{eq:prepolarization_calculation}).
The statements $\condD{4}{1}$--$\condD{6}{1}$ are all proved from the following claims:
\begin{align*}
 \text{(a)} \ e_2e^{(b,c,d)}w&=0 \text{ unless $b=0$ or $(b,c,d) \in \{(1,1,3),(3,2,3)\}$},\\
 \text{(b)} \ e_2e^{(1,1,3)}w&=e^{(1,2,3)}w, \ \ \ \text{(c)} \ e_2e^{(3,2,3)}w=w.
\end{align*}
The claim (a) follows from Proposition \ref{Prop:fund.rep.} (iii), (b) from the Serre relations,
and (c) from the construction of $W^1=W(\varpi_2)$ (see \cite[Section 5]{MR1890649}).
Now the proofs of $\condD{1}{1}$--$\condD{6}{1}$ are complete.

Let $\ell >1$, and assume that $\condD{1}{\ell-1}$--$\condD{6}{\ell-1}$ hold.
The following lemma is proved by straightforward calculations.

\begin{Lem}\label{Lem:direct_D}
 Let $m \in \Z$, and write $v=v_{\ell-1}$, $w=v_1$. 
 Then the following equalities hold, where we simply write $e^{(b,c,d)}w$ for $(e^{(b,c,d)}w)_{q^m}$ in the right-hand sides.
 \begin{align*}
  \text{{\normalfont(i)}}& \ 
  e^{(b,c,d)}\big(v \otimes (w)_{q^m}\big) = q^{-3c+3d}e^{(b,c,d)}v\otimes w + q^{-b+2d+m-2}e^{(b,c,d-1)}v \otimes e^{(0,0,1)}w\\
  & + q^{-2b+3c+d+2m-2}e^{(b,c,d-2)}v\otimes e^{(0,0,2)}w + q^{b+d+2m-2}e^{(b,c-1,d-2)}v\otimes e^{(0,1,2)}w\\
  & + q^{d+2m-2}e^{(b-1,c-1,d-2)}v\otimes e^{(1,1,2)}w + q^{-3b+6c+3m}e^{(b,c,d-3)}v \otimes e^{(0,0,3)}w\\
  & + q^{3c+3m-3}e^{(b,c-1,d-3)}v \otimes e^{(0,1,3)}w + q^{-b+3c+3m-2}e^{(b-1,c-1,d-3)}v \otimes e^{(1,1,3)}w\\
  &+ q^{3b+3m}e^{(b,c-2,d-3)}v \otimes e^{(0,2,3)}w + q^{2b+3m-2}e^{(b-1,c-2,d-3)}v \otimes e^{(1,2,3)}w\\
  &+ q^{b+3m-2}e^{(b-2,c-2,d-3)}v \otimes e^{(2,2,3)}w+ q^{3m}e^{(b-3,c-2,d-3)}v \otimes e^{(3,2,3)}w.
 \end{align*}
 \begin{align*}
  \text{\normalfont{(ii)}}& \ 
   e_2e^{(b,c,d)}\big(v \otimes (w)_{q^m}\big)=(q^{-3c+3d-3}e_2e^{(b,c,d)}v+q^{3m}e^{(b-3,c-2,d-3)}v) \otimes w\\
   &+q^{-b+2d+m-2}e_2e^{(b,c,d-1)}v \otimes e^{(0,0,1)}w + q^{-2b+3c+d+2m+1}e_2e^{(b,c,d-2)}v\otimes e^{(0,0,2)}w\\
   &+ (q^{-2b+3c+d+2m-2}e^{(b,c,d-2)}v+q^{b+d+2m-5}e_2e^{(b,c-1,d-2)}v)\otimes e^{(0,1,2)}w\\
   &+ q^{d+2m-2}e_2e^{(b-1,c-1,d-2)}v\otimes e^{(1,1,2)}w+ q^{-3b+6c+3m+6}e_2e^{(b,c,d-3)}v \otimes e^{(0,0,3)}w\\
   &+ (q^{-3b+6c+3m}e^{(b,c,d-3)}v+ q^{3c+3m-3}e_2e^{(b,c-1,d-3)}v)\otimes e^{(0,1,3)}w\\
   &+ q^{-b+3c+3m+1}e_2e^{(b-1,c-1,d-3)}v \otimes e^{(1,1,3)}w\\
   & + (q^{3c+3m-3}[2]_{q^3}e^{(b,c-1,d-3)}v + q^{3b+3m-6}e_2e^{(b,c-2,d-3)}v) \otimes e^{(0,2,3)}w\\
   & + (q^{-b+3c+3m-2}e^{(b-1,c-1,d-3)}v + q^{2b+3m-5}e_2e^{(b-1,c-2,d-3)}v) \otimes e^{(1,2,3)}w\\
   & + q^{b+3m-2}e_2e^{(b-2,c-2,d-3)}v \otimes e^{(2,2,3)}w + q^{3m+3}e_2e^{(b-3,c-2,d-3)}v \otimes e^{(3,2,3)}w.
 \end{align*}
\end{Lem}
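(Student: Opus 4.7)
The plan is to prove both (i) and (ii) by iteratively applying the coproduct formula (\ref{eq:div_coprod_e}) to the divided powers in $e^{(b,c,d)} = e_1^{(b)} e_2^{(c)} e_1^{(d)} e_0^{(d)}$ and tracking which summands survive. For (i), I would apply $e_0^{(d)}$ first to $v \otimes (w)_{q^m}$ via (\ref{eq:div_coprod_e}), using that $e_0$ acts on the affinized factor as $q^m \otimes e_0$, so that each summand of the coproduct expansion picks up an explicit power of $q^m$. I would then apply $e_1^{(d)}$, $e_2^{(c)}$, and $e_1^{(b)}$ in turn, expanding by (\ref{eq:div_coprod_e}) at each step. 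Two facts keep the expansion manageable. First, the Serre relations between nodes $0$ and $1$ in the Dynkin diagram (\ref{eq:Dynkin}) force $e_1^{(k)} e_0^{(m)} v = 0$ for $k$ sufficiently large relative to $m$, killing most intermediate terms on the left tensor factor (this is the analogue of the observation already used in the $G_2^{(1)}$ proof). Second, $\condD{1}{1}$ applied to $W^1 = W(\varpi_2)$ says that the only nonzero vectors of the form $e^{(b',c',d')} w$ are exactly the twelve listed on the right-hand side of (i); since these have pairwise distinct weights, each surviving right tensor factor must coincide with one of them up to scalar. The $q$-exponents on the right-hand side then come from three sources: the combinatorial factors $q_i^{j(k-j)}$ in (\ref{eq:div_coprod_e}), the $q^m$ shifts from $e_0$ acting on the affine factor, and the action of $t_i^{-k+j}$ on the weight vector produced on the right.

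For (ii), I would deduce the formula directly from (i) by applying $\Delta(e_2) = e_2 \otimes t_2^{-1} + 1 \otimes e_2$ to each of the twelve summands on the right-hand side of (i). The summand $e_2 \otimes t_2^{-1}$ contributes a term of the form $(e_2 e^{(b',c',d')} v) \otimes e^{(b'',c'',d'')} w$ multiplied by $q_2^{-\langle h_2,\,\wt(e^{(b'',c'',d'')}w)\rangle}$; the summand $1 \otimes e_2$ contributes $e^{(b',c',d')} v \otimes e_2(e^{(b'',c'',d'')}w)$, and the required actions of $e_2$ on the twelve right-factor vectors are precisely those recorded while verifying $\condD{1}{1}$--$\condD{6}{1}$. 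Most of these $e_2$-actions vanish; the nonzero ones, notably $e_2 \cdot e^{(1,1,3)}w = e^{(1,2,3)}w$ and $e_2 \cdot e^{(3,2,3)}w = w$, together with a short list of further identities carrying a factor $[2]_{q^3}$, produce exactly the mixed terms visible on the right-hand side of (ii).

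The main obstacle is pure bookkeeping. No conceptual input is required beyond the Serre relations and the nonvanishing data for $W^1$ from $\condD{1}{1}$, but the number of $q$-exponents to track through four nested coproducts, combined with the $q^m$ shifts from the affinization and the distinct $t_2$-weights of the twelve right-factor vectors, is substantial. Once the surviving terms are identified, only the exponents remain to be checked, and this reduces to a direct if lengthy calculation.
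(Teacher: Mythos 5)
Your proposal is correct and follows essentially the route the paper intends: the paper disposes of this lemma with "proved by straightforward calculations," meaning exactly the iterated application of (\ref{eq:div_coprod_e}) with the $q^m$ shifts from the affinization, the vanishing $e_1^{(k)}e_0^{(j)}u=0$ for $k>j$ (on either tensor factor, which is what forces the two inner exponents to agree), the classification of nonzero $e^{(b',c',d')}w$ from $\condD{1}{1}$, and, for (ii), the recorded $e_2$-actions (a)--(c) on $W^1$ together with $e_2e^{(0,c,d)}w=[c+1]_{q^3}e^{(0,c+1,d)}w$. The only remaining work in your outline is the exponent bookkeeping, which is all the paper claims as well.
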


By the same arguments, (\ref{eq:nonzero_condition}) and Lemma \ref{Lem:bilinear_induction} are also proved in type $D_4^{(3)}$.
Then $\condD{1}{\ell}$ follows from Lemma \ref{Lem:direct_D} (i), (\ref{eq:nonzero_condition}), and $\condD{1}{\ell-1}$.
$\condD{2}{\ell}$ and $\condD{3}{\ell}$ are proved by calculating the values of the prepolarization by using Lemmas 
\ref{Lem:bilinear_induction} and \ref{Lem:direct_D} (i), and applying $\condD{2}{\ell-1}$ and $\condD{3}{\ell-1}$ respectively.

Let us prove $\condD{4}{\ell}$. 
Let $(b,c,d)\in \Z_{\ge 0}^{\times 3}$. We may assume $2\leq 2b\leq 3c\leq 2d \leq 6\ell$.
Set $B = 1+qA$.
By Lemmas \ref{Lem:bilinear_induction} and \ref{Lem:direct_D} (ii), we have
\[ ||e_2e^{(b,c,d)}v_\ell||^2 \in X+Y+2Z+W,
\]
where 
\begin{align*}
 &X= q^{-3c+3d-3}(q^{-3\ell}+q^{3\ell})(e^{(b-3,c-2,d-3)}v,e_2e^{(b,c,d)}v)B,\phantom{q^{-6b+12c}||e^{(b,c,d-3)}v||^2B+BBB}
\end{align*}
\begin{align*}
 &Y= ||e^{(b-3,c-2,d-3)}v||^2B+q^{-4b+6c+2d-4}||e^{(b,c,d-2)}v||^2B + q^{-6b+12c}||e^{(b,c,d-3)}v||^2B\phantom{BBB\,}\\
    &+q^{6c-12}||e^{(b,c-1,d-3)}v||^2B + q^{-2b+6c-4}||e^{(b-1,c-1,d-3)}v||^2B,
\end{align*}
\begin{align*}
 &Z= q^{-b+3c+2d-7}(e^{(b,c,d-2)}v,\!e_2e^{(b,c-1,d-2)}v)B+q^{-3b+9c-3}(e^{(b,c,d-3)}v,\!e_2e^{(b,c-1,d-3)}v)B\\
    &+q^{3b+3c-12}(e^{(b,c-1,d-3)}v,\!e_2e^{(b,c-2,d-3)}v)B+q^{b+3c-7}(e^{(b-1,c-1,d-3)}v,\!e_2e^{(b-1,c-2,d-3)}v)B,
\end{align*}
\begin{align*}
 &W= q^{-6c+6d-6}||e_2e^{(b,c,d)}v||^2B+q^{-2b+4d-4}||e_2e^{(b,c,d-1)}v||^2B\phantom{+q^{2d-4}||e_2e^{(b-1,c-1,d-2)}v||^2\,}\\
  &+q^{-4b+6c+2d+2}||e_2e^{(b,c,d-2)}v||^2B+ q^{2b+2d-10}||e_2e^{(b,c-1,d-2)}v||^2B\\
  &+q^{2d-4}||e_2e^{(b-1,c-1,d-2)}v||^2B+q^{-6b+12c+12}||e_2e^{(b,c,d-3)}v||^2B\\
  &+q^{6c-6}||e_2e^{(b,c-1,d-3)}v||^2B+q^{-2b+6c+2}||e_2e^{(b-1,c-1,d-3)}v||^2B\\
  &+ q^{6b-12}||e_2e^{(b,c-2,d-3)}v||^2B+q^{4b-10}||e_2e^{(b-1,c-2,d-3)}v||^2B\\
  &+ q^{2b-4}||e_2e^{(b-2,c-2,d-3)}v||^2B+q^6||e_2e^{(b-3,c-2,d-3)}v||^2B.
\end{align*}

Note that $X=0$ if $b<3$, and $X \subseteq q^{-6c+6d-6\ell}A$ if $b\geq 3$ by $\condD{5}{\ell-1}$.
$Y \subseteq A$ holds by $\condD{1}{\ell-1}$ and $\condD{2}{\ell-1}$, and $Z \subseteq A$ holds by $\condD{6}{\ell-1}$.
Finally $W \subseteq q^{\min\{0,6b-6c\}}$ for $b<3$ and $W\subseteq q^{\min\{0,6b-6c,-6c+6d-6\ell\}}$ for $b \geq 3$ 
are proved from $\condD{1}{\ell-1}$ and 
$\condD{6}{\ell-1}$. Hence $\condD{4}{\ell}$ follows.
The proofs of $\condD{5}{\ell}$ and $\condD{6}{\ell}$ are similar, and we omit the details.

Now Proposition \ref{Prop:hwv_G} is deduced from $\condD{1}{\ell}$--$\condD{4}{\ell}$ by exactly the same arguments
as in the case of type $G_2^{(1)}$.

\newcommand{\etalchar}[1]{$^{#1}$}
\def\cprime{$'$} \def\cprime{$'$} \def\cprime{$'$} \def\cprime{$'$}


\end{document}